\newcommand{\be}{\begin{equation}}
\newcommand{\ee}{\end{equation}}
\newcommand{\ra}{\rightarrow}
\newcommand{\bea}{\begin{eqnarray}}
\newcommand{\eea}{\end{eqnarray}}
\newcommand{\beas}{\begin{eqnarray*}}
\newcommand{\eeas}{\end{eqnarray*}}
\newcommand{\R}{{\mathbb R}}
\newcommand{\Z}{{\mathbb Z}}
\def\X{\mathfrak X}
\def\on{\operatorname}
\def\pa{\partial}
\def\Sec{\operatorname{Sec}}
\def\hol{\operatorname{hol}}
\def\xd{\on{d}}
\def\xdp{{\on{d}^\Pi}}
\def\cM{{\mathcal M}}
\def\A{{\mathcal A}}
\def\cR{{\mathcal R}}
\def\ra{\rightarrow}
\def\we{\wedge}
\def\cL{{\mathcal L}^\Pi}
\def\wt{\widetilde}
\def\ph{\phi}
\def\la{\langle}
\def\ran{\rangle}
\newcommand{\Ll}{{\pounds}}
\def\ot{\otimes}
\def\ti{\times}
\def\mod{\on{mod}}
\def\div{\on{div}}
\def\graph{\on{graph}}
\def\ch{\on{char}}
\def\ol{\overline}
\newcommand{\dd}{\mathrm{d}}
\mathchardef\za="710B  
\mathchardef\zb="710C  
\mathchardef\zg="710D  
\mathchardef\zd="710E  
\mathchardef\zve="710F 
\mathchardef\zz="7110  
\mathchardef\zh="7111  
\mathchardef\zvy="7112 
\mathchardef\zi="7113  
\mathchardef\zk="7114  
\mathchardef\zl="7115  
\mathchardef\zm="7116  
\mathchardef\zn="7117  
\mathchardef\zx="7118  
\mathchardef\zp="7119  
\mathchardef\zr="711A  
\mathchardef\zs="711B  
\mathchardef\zt="711C  
\mathchardef\zu="711D  
\mathchardef\zvf="711E 
\mathchardef\zq="711F  
\mathchardef\zc="7120  
\mathchardef\zw="7121  
\mathchardef\ze="7122  
\mathchardef\zy="7123  
\mathchardef\zvp="7124  
\mathchardef\zvr="7125 
\mathchardef\zvs="7126 
\mathchardef\zf="7127  
\mathchardef\zG="7000  
\mathchardef\zD="7001  
\mathchardef\zY="7002  
\mathchardef\zL="7003  
\mathchardef\zX="7004  
\mathchardef\zP="7005  
\mathchardef\zS="7006  
\mathchardef\zU="7007  
\mathchardef\zF="7008  
\mathchardef\zC="7009  
\mathchardef\zW="700A  
\def\y{\mathbf{y}}
\def\U{\mathbf{U}}
\def\uu{\mathbf{u}}
\def\wh{\widehat}
\def\ulg{\underline{\zg}}
\newcommand{\bk}[2]{\ensuremath{\langle #1 | #2 \rangle}}
\newcommand{\PP}{\operatorname{P}}
\newtheorem{theorem}{Theorem}[section]
\newtheorem{proposition}[theorem]{Proposition}
\theoremstyle{definition}
\newtheorem{example}[theorem]{Example}
\newtheorem{definition}[theorem]{Definition}
\newtheorem{remark}[theorem]{Remark}
\begin{document}
\title{Modular classes of skew algebroid relations \thanks{Research
supported by the Polish Ministry of Science and Higher Education under the grant N N201 416839.}}

\author{Janusz Grabowski \\ \\{\it Institute of Mathematics}\\
                {\it Polish Academy of Sciences}}
\maketitle
\begin{abstract} {\it Skew algebroid} is a natural generalization of the concept of Lie algebroid. In this paper, for a skew algebroid $E$, its {\it modular class} $\mod(E)$ is defined in the classical as well as in the supergeometric formulation. It is proved that there is a homogeneous nowhere-vanishing 1-density on $E^*$ which is invariant with respect to all Hamiltonian vector fields if and only if $E$ is modular, i.e. $\mod(E)=0$. Further, {\it relative modular class} of a subalgebroid is introduced and studied together with its application to holonomy, as well as {\it modular class of a skew algebroid relation}. These notions provide, in particular, a unified approach to the concepts of a modular class of a Lie algebroid morphism and that of a Poisson map.

\bigskip\noindent
\textit{MSC 2010: Primary 53D17; Secondary 17B56, 17B63, 17B66, 17B70, 58A32}

\medskip\noindent
\textit{Key words: Lie algebroid, modular class, invariant volumes, N-manifold, Berezinian volumes,
Hamiltonian vector fields, Lie algebroid morphism, Poisson relation, holonomy, coisotropic submanifold.}
\end{abstract}

\section{Introduction}
The concept of {\it modular class} of a Lie algebroid, proposed by Weinstein and introduced in \cite{ELW,We},
is a natural extension of that for a Poisson manifold \cite{Ko,We} and hence an analogue of the modular
automorphism group of a von Neumann algebra. It is well known that a Lie algebroid structure on a vector
bundle $E$ is canonically associated with a linear Poisson structure on the dual bundle $E^\ast$, and the modular class of $E$ can be viewed as the obstruction to the existence of a homogeneous measure on $E^\ast$ being invariant with respect to all Hamiltonian vector fields on the Poisson manifold $E^\ast$ \cite{We}. The question of the existence of a measure on $E^\ast$ invariant with respect to certain Hamiltonian vector fields of mechanical origins was studied recently by Marrero \cite{M}. In the framework of Lie-Rinehart algebras, the concept of modular class was developed by Huebschmann \cite{Hu}, and the role of Batalin-Vilkovisky algebras was emphasized in \cite{KS,Xu}.

It was then observed in \cite{GMM} that the modular class can be naturally defined also for a base-preserving morphism of Lie algebroids. A similar object has been studied in \cite{KSW} under the name {\it relative modular class}
and then extended to arbitrary Lie algebroid morphisms \cite{KLW}. In \cite{Fe} Fernandes constructed a
sequence of secondary characteristic classes of a Lie algebroid whose first element coincides with the modular
class (see also \cite{Ku}). Secondary characteristic classes of a base-preserving Lie algebroid morphism were
studied also by Vaisman \cite{Vai}.

Poisson manifolds were revisited recently by Caseiro and Fernandes in \cite{CF}, where modular class of a Poisson map was defined and studied in detail. The problem was that, although any Poisson structure $\zL$ on a manifold $M$ gives rise to a Lie algebroid structure on the cotangent bundle $T^\ast M$ and the modular class of this Lie algebroid is just twice the modular class of the Poisson manifold, a Poisson map $\zf:M_1\ra M_2$ does not in general induce any canonical Lie algebroid morphism between $T^\ast M_1$ and $T^\ast M_2$.

On the other hand, several concepts of objects generalizing Lie algebroids appeared recently in the
literature. One of them is the concept of  {\it general algebroid} or its skew-symmetric version, a {\it skew
algebroid}, introduced in \cite{GU3,GU} and applied in analytical mechanics in \cite{GG,GGU} as an extension of
Lie algebroid/groupoid description of Lagrangian and Hamiltonian formalisms proposed by Weinstein \cite{We1}. The interest to skew algebroids, for which the Jacobi identity valid for Lie algebroids is dropped, comes from the nonholonomic mechanics in which they provide a natural geometric framework (see \cite{GLMM} and references therein).

Our aim in this paper is twofold: first, to define modular class for a skew algebroid and, second, to extend
the concept of modular class of a Lie algebroid morphism to that of a linear relation between skew algebroids. The idea is very simple and comes from considering Poisson maps. Namely, if $\zf:M_1\ra M_2$ is a Poisson map between Poisson manifolds, then the tangent bundle \ $T\!\graph(\zf)$ of the graph of $\zf$ is a coisotropic subbundle in the direct product $TM_1\ti \ol{TM_2}\simeq T(M_1\ti \ol{M_2})$, where $\ol{M}$ denotes the manifold $M$ with the opposite Poisson structure. Hence, its annihilator $\cR=(T\graph(\zf))^\perp$ is a Lie subalgebroid in the direct product Lie algebroid $T^\ast M_1\ti \ol{T^\ast M_2}$, thus, by definition, a {\it Lie algebroid relation}. Hence, dealing with a Poisson map is just a particular case of dealing with a Lie algebroid relation for which modular class can be naturally defined, and the whole picture can be easily extended to the context of skew algebroids.

Note that this framework is not only unifying several concepts of modular class and generalizes the setting by dropping the Jacobi identity, but it fits also much better to the modern approach to various problems in geometry by considering relations rather than maps.

The paper is organized as follows. In Section 2 we present basic notions and facts from the theory of skew algebroids.
Modular class of a skew algebroid and characterizations of modularity are presented in Section 3 together with an application to nonholonomic mechanics. Section 4 contains an elementary proof that the modular class can be simply defined in the language of supergeometry as the (super)divergence of the de Rham vector field (homological vector field in the case of a Lie algebroid) with respect to a natural class of Berezinian non-oriented densities on the supermanifold $E[1]$. Relative modular class of a subalgebroid is defined in Section 5 and then used in Section 6 to a nice description of the corresponding holonomy along an admissible path. Finally, in Section 7, we present the modular class of a skew algebroid relation (and, more generally, a linear relation between skew algebroids) together with showing its unifying role in the theory.

\section{Skew algebroids}

Let $\zt:E\to M$ be a rank-$n$ vector bundle over an $m$-dimensional manifold $M$, and let $\zp:E^*\ra M$ be
its dual. Let $\mathcal A^i(E)=\on{Sec}(\wedge^iE)$, for $i=0,1,2,\dots$, denotes the module of sections of
the bundle $\wedge^iE$. Let us put $\mathcal A^i(E)=\{0\}$ for $i<0$, and denote with $\mathcal
A(E)=\bigoplus_{i\in \mathbb Z}\mathcal A^i(E)$ the Grassmann algebra of multisections of $E$. It is a graded
commutative associative algebra with respect to the wedge product. Similarly we define the Grassmann algebra
$\mathcal A(E^*)$ for the dual bundle $E^*$.

There are different equivalent ways to define a {\it skew algebroid} structure on $E$. Here we will list only
three of them. The notation is borrowed from \cite{GU, GGU, GG} and we refer to these papers for details. In
particular, we use affine coordinates $(x^a,\zx_i)$ on $E^*$ and the dual coordinates $(x^a,y^i)$ on $E$,
as associated with dual local bases, $(e_i)$ and $(e^i)$, of sections of $E$ and $E^*$, respectively.

\begin{definition}  A {\it skew algebroid} structure on $E$ is given by a linear bivector
field $\zP$ on $E^*$. In local coordinates,
\be\label{SA} \Pi =c^k_{ij}(x)\zx_k
\partial _{\zx_i}\otimes \partial _{\zx_j} + \zr^b_i(x) \partial _{\zx_i}
\wedge \partial _{x^b}\,,
\ee
where $c^k_{ij}(x)=-c^k_{ji}(x)$. The {\it opposite} algebroid $\ol{E}$ is the skew algebroid corresponding to
the bivector field $-\Pi$. If $\Pi$ is a Poisson tensor, we speak about a {\it Lie algebroid}.
\end{definition}

As the bivector field $\Pi$ defines a bilinear bracket $\{\cdot,\cdot\}_\Pi$ on the algebra $C^\infty(E^*)$ of
smooth functions on $E^*$ by $\{\zvf,\psi\}_{\zP}=\langle\zP,\xd\zvf\we\xd\psi\rangle$, where $\langle\cdot,\cdot\rangle$ stands for contractions, we get the following.

\begin{theorem}
A skew algebroid structure $(E,\Pi)$ can be equivalently defined as
\begin{itemize}
\item a skew-symmetric $\R$-bilinear bracket
$[\cdot ,\cdot]_\Pi $ on the space $\Sec(E)$ of sections of $E$, together with a vector bundle morphisms\ $\zr
\colon E\rightarrow T M$ ({\it the anchor}), such that
\be\label{qd} [X,fY]_\Pi =\zr(X)(f)Y +f [X,Y]_\Pi,
\ee
for all $f \in C^\infty (M)$, $X,Y\in \Sec(E)$,

\item or as a derivation $\xdp$ of degree 1 in the Grassmann algebra
$\A(E^*)$ (the {\it de Rham derivative}). The latter is a map $\xdp:\A(E^*)\ra\A(E^*)$ such that
$\xdp:\A^i(E^*)\ra\A^{i+1}(E^*)$ and, for $\za\in\A^a(E^*)$, $\zb\in\A^b(E^*)$, we have
\be\label{dR}\xdp(\za\we\zb)=\xdp\za\we\zb+(-1)^a\za\we\xdp\zb\,.\ee
\end{itemize}
\end{theorem}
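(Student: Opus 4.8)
The plan is to treat the bivector $\Pi$ as the primary datum and to read off, by direct computation, both the bracket-plus-anchor description and the de~Rham derivative as two different encodings of the same information, and then to indicate the inverse passages. Throughout, the only structural inputs are the skew-symmetry of $\Pi$ and the Leibniz rule~\eqref{qd}; I will stress that the Jacobi identity is never used, which is exactly what separates a skew algebroid (where $\xdp$ is merely a degree-$1$ derivation) from a Lie algebroid (where in addition $\xdp\circ\xdp=0$).

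First I would set up the correspondence between $\Sec(E)$ and fibrewise-linear functions on $E^*$: a section $X=X^ie_i$ determines $\iota_X\in C^\infty(E^*)$, $\iota_X(\za)=\la\za,X\ran$, so that $\iota_{e_i}=\zx_i$ and $\iota_{fX}=(\zp^*f)\iota_X$, while the pullbacks $\zp^*f$ exhaust the fibrewise-constant functions. Feeding $\iota_X$, $\iota_Y$ and $\zp^*f$ into $\{\cdot,\cdot\}_\Pi=\la\Pi,\xd(\cdot)\we\xd(\cdot)\ran$ and contracting against~\eqref{SA} gives
\be\label{qlin}
\{\iota_X,\iota_Y\}_\Pi=c^k_{ij}\,\zx_k X^iY^j+\zr^b_i\big(X^i\,\pa_{x^b}Y^j-Y^i\,\pa_{x^b}X^j\big)\zx_j,\qquad \{\iota_X,\zp^*f\}_\Pi=\zp^*\!\big(\zr^b_iX^i\,\pa_{x^b}f\big).
\ee
The first expression is again fibrewise-linear, hence equals $\iota_{[X,Y]_\Pi}$ for a unique skew bracket with $[e_i,e_j]_\Pi=c^k_{ij}e_k$, while the second identifies the anchor $\zr(e_i)=\zr^b_i\pa_{x^b}$. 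Since $\{\cdot,\cdot\}_\Pi$ is a biderivation of the pointwise product, applying it to $\iota_X$ and $\iota_{fY}=(\zp^*f)\iota_Y$ and splitting by the Leibniz rule reproduces exactly~\eqref{qd}, and the skew-symmetry $c^k_{ij}=-c^k_{ji}$ is the skew-symmetry of the bracket. Conversely, a bracket and anchor obeying~\eqref{qd} have tensorial structure functions $c^k_{ij},\zr^b_i$; substituting them into~\eqref{SA} yields a well-defined linear bivector, the two constructions being mutually inverse on generators.

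Next I would pass to the de~Rham derivative via the Cartan-type formula: for $\zw\in\A^k(E^*)$ and $X_0,\dots,X_k\in\Sec(E)$,
\be\label{cartan}
(\xdp\zw)(X_0,\dots,X_k)=\sum_{i}(-1)^i\zr(X_i)\big(\zw(X_0,\dots,\wh{X_i},\dots,X_k)\big)+\sum_{i<j}(-1)^{i+j}\zw\big([X_i,X_j]_\Pi,X_0,\dots,\wh{X_i},\dots,\wh{X_j},\dots,X_k\big).
\ee
I would check that~\eqref{cartan} is $C^\infty(M)$-multilinear, so that $\xdp\zw$ is a genuine section of $\wedge^{k+1}E^*$, and that $\xdp$ raises degree by one and satisfies the graded Leibniz rule~\eqref{dR}; both verifications use only~\eqref{qd} and skew-symmetry. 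For the reverse passage, any degree-$1$ derivation of $\A(E^*)$ is determined by its restriction to generators in degrees $0$ and $1$: its action on $\A^0(E^*)=C^\infty(M)$ fixes the anchor through $\zr(X)(f)=\la\xdp f,X\ran$, and its action on $\A^1(E^*)=\Sec(E^*)$ fixes the bracket through $\la\xdp\za,X\we Y\ran=\zr(X)\la\za,Y\ran-\zr(Y)\la\za,X\ran-\la\za,[X,Y]_\Pi\ran$, the derivation property guaranteeing that these data satisfy~\eqref{qd}.

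The one genuinely delicate point, where I expect the real work to sit, is the tensoriality-and-Leibniz check for~\eqref{cartan}. Replacing an argument $X_0$ by $fX_0$ produces, besides the wanted overall factor $f$, two ``anomalous'' families proportional to $\zr(X_j)(f)$: one from differentiating $f$ through the anchor terms with $i\ge 1$, and one from expanding $[fX_0,X_j]_\Pi$ via skew-symmetry and~\eqref{qd} in the bracket terms with $i=0$. A short sign count shows these carry opposite signs in matching argument orderings and cancel identically, and it is precisely here that one sees the Jacobi identity is not needed---only the Leibniz rule and antisymmetry enter. This cancellation is the crux; once it is established, the graded Leibniz rule~\eqref{dR} follows by the same bookkeeping applied to a product $\za\we\zb$, and the three descriptions are thereby identified.
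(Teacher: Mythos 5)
Your proposal is correct and follows essentially the route the paper indicates: the paper gives no detailed proof, only the defining relations (\ref{drel}), (\ref{drel1}) and (\ref{dRd}) with a reference to \cite{GU,GGU,GG}, and your computation of $\{\iota_X,\iota_Y\}_\Pi$ and $\{\iota_X,\zp^*f\}_\Pi$ against (\ref{SA}), together with the tensoriality check for the Cartan formula and the inverse passage from a degree-$1$ derivation via its action in degrees $0$ and $1$, is exactly the standard fleshing-out of those relations. The only cosmetic difference is that you build $\xdp$ from the Cartan formula whereas the paper's primary definition (\ref{dRd}) uses the Schouten bracket with vertical lifts, an equivalence the paper itself points out.
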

The bracket $[\cdot,\cdot]_\Pi$ and the anchor $\zr$ are related to the bracket $\{\cdot,\cdot\}_{\zP}$
according to the formulae
\bea\label{drel}
        \zi([X,Y]_\Pi)&= \{\zi(X), \zi(Y)\}_{\zP},  \\
        \zp^*(\zr(X)(f))       &= \{\zi(X), \zp^*f\}_{\zP}\,,\label{drel1}
                                                   \eea
where we denoted with $\zi(X)$ the linear function on $E^*$ associated with the section $X$ of $E$, i.e.
$\zi(X)(e^\ast_p)=\la X(p),e^\ast_p\ran$ for each $e^\ast_p\in E^\ast_p$.

 \noindent The {\it exterior
derivative} $\xdp$ is determined by the formula
\be\label{dRd}(\xdp\zm)^v=[\zP,\zm^v],\ee
where $\zm^v$ is the natural vertical lift of a k-form $\zm\in\A^k(E^*)$ to a vertical k-vector field on $E^*$
and $[\cdot,\cdot]$ is the Schouten bracket of multivector fields.

\medskip
In local bases of sections and the corresponding local coordinates,
\bea\label{r1}
[e_i,e_j]_\Pi(x)&=&c^k_{ij}(x)e_k,\\
\zr(e_i)(x)&=&\zr^a_i(x)\pa_{x^a},\label{r2}\\
\xdp f(x)&=&\zr^a_i(x)\frac{\pa f}{\pa x^a}(x)e^i,\label{r3}\\
\xdp e^i(x)&=&c^i_{lk}(x)e^k\we e^l\,.\label{r4}
\eea
Of course, in general $(\xdp)^2\ne 0$, and $(\xdp)^2=0$ if and only if $\Pi$ is a Poisson tensor, thus our
skew algebroid is actually a Lie algebroid. If the skew algebroid satisfies the identity (e.g., it is a Lie algebroid)
\be\label{AL} \zr([X,X']_\zP)=[\zr(X),\zr(X')]\,,
\ee
we speak about an {\it almost Lie algebroid}. For an almost Lie algebroid, $(\xdp)^2=0$ as a map from
$\A^0(E^*)=C^\infty(M)$ to $\A^2(E^*)$, thus the standard first cohomology space $H^1(E)$ is well defined as well as
homotopies of admissible curves (see \cite{GJ}).

\begin{remark} (supergeometric picture) Note that the formula (\ref{dRd}) is equivalent to the well-known Cartan's formula for the de Rham
derivative. Moreover, as the Grassmann algebra $\A(E^*)$ can be understood as the algebra of smooth functions
on the graded (super)manifold $E[1]$ (an {\it N-manifold of degree} 1 in the terminology of \v Severa and Roytenberg \cite{Roy, Sev}),
following \cite{Va} we can view the de Rham derivative $\xdp$ as a vector field of degree 1 on $E[1]$. This
vector field is {\it homological}, $(\xdp)^2=0$, if and only if we are actually dealing with a Lie algebroid.
In local supercoordinates $(x,\y)$ associated canonically with our standard affine coordinates $(x,y)$ we have
\be\label{sdR} \xdp=\frac{1}{2}c^k_{ij}(x)\y^j\y^i\pa_{\y^k} + \zr^b_i(x)\y^i\partial _{x^b}\,.
\ee
\end{remark}

\bigskip For any section $X\in\Sec(E)$, the {\it Lie derivative} $\cL_X$, acting in $\A(E)$ and $\A(E^*)$, is defined in the standard way:  $\cL_X(f)=\zr(X)(f)$ for $f\in C^\infty(M)$,
$$\cL_X(Y_1\we\cdots\we Y_a)=\sum_iY_1\we\cdots\we[X,Y_i]_\Pi\we\cdots\we Y_a$$
for $Y_1\we\cdots\we Y_a\in\A^a(E)$, and
\be\label{LD}\la\cL_X(\za),Y_1\we\cdots\we Y_a\ran=\cL_X\la \za,Y_1\we\cdots\we Y_a\ran-\la\za,\cL_X(Y_1\we\cdots\we Y_a)\ran\,,\ee
for $\za\in\A^a(E^\ast)$, $a>0$. The formula (\ref{LD}) is equivalent to the Cartan's formula $\cL_X=i_X\xdp+\xdp i_X$.

\medskip
Recall that, given a skew algebroid $E$, we can associate with any $C^1$-function $H$ on $E^*$ its {\it
Hamiltonian vector field} $\X_H$ like in the standard case: $\X_H=i_{\xd\! H}\Pi$. In local coordinates,
\be\label{ham}
\X_H(x,\zx)=\left(c^k_{ij}(x)\zx_k\frac{\pa H} {\partial {\zx_i}}(x,\zx)- \zr^a_j(x)\frac{\pa H}{\partial
{x^a}}(x,\zx)\right) \partial _{\zx_j} + \zr^b_i(x) \frac{\pa H}{\partial {\zx_i}}(x,\zx)
\partial _{x^b}\,.
\ee
Another geometrical construction in the skew-algebroid setting is the \emph{complete lift of an algebroid
section} (cf. \cite{GU3,GU}). For every $C^1$-section $X=f^i(x)e_i\in\Sec(E)$ we can construct canonically a
vector field $\dd_T^\Pi(X)\in\Sec(T E)$ which in local coordinates reads as
\begin{equation}\label{eqn:tan_lift}
\dd_T^\Pi(X)(x,y) = f^i(x)\rho^a_i(x)\pa_{x^a} + \left( y^i \rho^a_i(x) \frac{\pa f^k}{\pa x^a}(x) +
c^k_{ij}(x)y^if^j(x) \right) \pa _{y^k}.
\end{equation}
The vector field $\dd_T^\Pi(X)$ is homogeneous (linear) with respect to $y$'s.

\medskip
An {\it $E$-connection} on a vector bundle $\zz:A\ra M$ is a `covariant derivative'
$\nabla:\Sec(E)\ti\Sec(A)\ra\Sec(A)$ such that $\nabla_{fX}Y=f\nabla_XY$ and
\be\label{cqd}\nabla_X(fY)=\zr(X)(f)Y+f\nabla_XY
\ee
for any $f\in C^\infty(M)$, $X\in\Sec(E)$, and $Y\in\Sec(A)$.
This means that $D=\nabla_X$ is a {\it quasi-derivation} (called also {\it derivative endomorphism, module derivation, covariant differential operator}, etc.; see historical remarks in \cite{KSMa} and references therein) on sections of $A$ with the anchor $\wh{D}=\zr(X)\in\X(M)$.
The same property has the operator $ad^\Pi_X=[X,\cdot]_\Pi$ associated with any skew algebroid bracket. Actually, a skew algebroid is a skew-symmetric bilinear operation which is a quasi-derivation with respect to each argument.

Note that a quasi-derivation $D$ on sections of $A$ induces canonically the {\it dual quasi-derivation} $D^*$ on sections of $A^\ast$, with the same anchor $\wh{D}\in\X(M)$, defined {\it via}
\be\label{dqd} \bk{D^\ast(\za)}{Y}=\wh{D}\left(\bk{\za}{Y}\right)-\bk{\za}{D(Y)}\,.
\ee
It is easy to see that linear vector fields on $A$ are projectable and correspond to quasi-derivations on sections of $A^\ast$.
\begin{theorem} There is a one-one correspondence between quasi-derivations $D$ on sections of $A^\ast$ and linear vector fields $\zq^D$ on a vector bundle $A$ over $M$ established by the formula
\be\label{eph} \zq^D(\zi(\za))=\zi(D(\za))\,.
\ee
The vector field $\zq^D$ is projectable onto the anchor $\wh{D}\in\X(M)$.
\end{theorem}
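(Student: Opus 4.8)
The plan is to treat a vector field on $A$ as a derivation of $C^\infty(A)$ and to pin it down on the fiberwise-polynomial functions, which are generated over $\zz^*C^\infty(M)$ by the fiberwise-linear functions $\zi(\za)$, $\za\in\Sec(A^*)$. In a local frame $(e_\zm)$ of $A$ with dual frame $(e^\zm)$ of $A^*$ the functions $z^\zm=\zi(e^\zm)$ are precisely the fiber coordinates, so that $(x^a,z^\zm)$ are local coordinates on $A$ and the coordinate functions themselves lie among these generators. Given a quasi-derivation $D$ with anchor $\wh D$, I would declare $\zq^D$ on generators by $\zq^D(\zz^*f)=\zz^*(\wh D f)$ for $f\in C^\infty(M)$ and $\zq^D(\zi(\za))=\zi(D\za)$ for $\za\in\Sec(A^*)$, and then extend it by the Leibniz rule.

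The only point requiring care is well-definedness, and this is where the defining axiom of a quasi-derivation enters; I regard it as the main (though short) obstacle. The two families of generators are not free: they satisfy $\zi(f\za)=\zz^*f\cdot\zi(\za)$, and one must check that the prescribed action respects this relation before extending by Leibniz. Applying the Leibniz rule to the right-hand side and using the $C^\infty(M)$-linearity of $\zi$ gives
\be \zq^D\big(\zz^*f\cdot\zi(\za)\big)=\zz^*(\wh D f)\,\zi(\za)+\zz^*f\,\zi(D\za)=\zi\big((\wh D f)\za+f\,D\za\big), \ee
while the direct prescription applied to $\zi(f\za)$ gives $\zi\big(D(f\za)\big)$. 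The two coincide for all $f$ and $\za$ exactly when $D(f\za)=(\wh D f)\za+f\,D\za$, that is, exactly when $D$ is a quasi-derivation with anchor $\wh D$. Thus the quasi-derivation identity is precisely the compatibility condition that turns the prescription into a genuine derivation of $C^\infty(A)$.

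It then remains to read off the shape of $\zq^D$ and to invert the construction. Writing $D(e^\zn)=D^\zn_\zm e^\zm$ and $\wh D=\wh D^a\,\pa_{x^a}$, the defining formula (\ref{eph}) forces $\zq^D(z^\zn)=\zi(D(e^\zn))=D^\zn_\zm z^\zm$ and $\zq^D(x^a)=\wh D^a$, whence
\be \zq^D=\wh D^a(x)\,\pa_{x^a}+D^\zn_\zm(x)\,z^\zm\,\pa_{z^\zn}. \ee
This is manifestly a linear vector field, and its horizontal part depends only on $x$ and equals $\wh D$, which settles projectability. For the converse, every linear vector field on $A$ has exactly this coordinate form; from it I would recover $\wh D$ as its projection and define $D$ on $\Sec(A^*)$ by $D(\za)=\zi^{-1}\big(\zq(\zi(\za))\big)$, the computation of the previous paragraph run backwards showing that $D$ is then a quasi-derivation with anchor $\wh D$. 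Since both assignments are expressed through the single frame-independent relation (\ref{eph}), they are globally well defined and mutually inverse, establishing the asserted one-to-one correspondence.
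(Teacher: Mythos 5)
Your argument is correct: defining $\zq^D$ on the generators $\zz^*f$ and $\zi(\za)$, identifying the quasi-derivation identity as exactly the compatibility with the relation $\zi(f\za)=\zz^*f\cdot\zi(\za)$, and then reading off the local form $\zq^D=\wh D^a(x)\,\pa_{x^a}+D^\zn_\zm(x)z^\zm\pa_{z^\zn}$ is precisely the standard verification, and your inversion for the converse is sound. The paper itself states this theorem without proof (as an "easy to see" fact), and your write-up supplies exactly the argument it implicitly relies on.
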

The following theorem is an abstract version of the complete tangent lift for sections of a skew algebroids as defined in \cite{GU1,GU3,GU}.

\begin{theorem} With any quasi-derivation $D$ on a vector bundle $A$ over $M$ there is associated a unique linear vector field $D^c$ on $A$, the {\it complete lift of} $D$, defined as $D^c=\zq^{D^\ast}$.
In local coordinates $(x^a,z^i)$ associated with a local basis $f_i$ of sections of $A$, the complete lift of the quasi-derivation $$D(h^i(x)f_i)=\left(h^i(x)b^k_i(x)+\frac{\pa h^k}{\pa x^a}\zs^a(x)\right)f_k$$
reads as
\be\label{cl} D^c(x,z)=\zs^a(x)\pa_{x^a}-z^ib^k_i(x)\pa_{z^k}\,.
\ee
\end{theorem}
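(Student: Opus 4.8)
The plan is to reduce both the existence--uniqueness assertion and the coordinate formula (\ref{cl}) to the one--one correspondence of the preceding theorem, applied not to $D$ itself but to its dual $D^\ast$. Since $D$ is a quasi-derivation on $\Sec(A)$, the discussion around (\ref{dqd}) already grants that $D^\ast$ is a well-defined quasi-derivation on $\Sec(A^\ast)$ with the same anchor $\wh{D}\in\X(M)$. The earlier theorem then associates with $D^\ast$ a unique linear vector field $\zq^{D^\ast}$ on $A=(A^\ast)^\ast$, characterized by $\zq^{D^\ast}(\zi(\za))=\zi(D^\ast(\za))$ for every $\za\in\Sec(A^\ast)$ and projecting onto $\wh{D}$. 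Setting $D^c:=\zq^{D^\ast}$ thus immediately yields the claimed linear vector field together with its uniqueness, and nothing further is needed for the abstract part of the statement.

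For the local formula I would use exactly these two defining properties. Projectability onto $\wh{D}=\zs^a\pa_{x^a}$ fixes the horizontal part of $D^c$ to be $\zs^a\pa_{x^a}$. To pin down the vertical part, recall that the fibre coordinates are $z^j=\zi(f^j)$, where $(f^j)$ is the basis of $\Sec(A^\ast)$ dual to $(f_i)$; hence I would compute $D^c(z^j)=\zq^{D^\ast}(\zi(f^j))=\zi(D^\ast(f^j))$, so that everything reduces to evaluating $D^\ast(f^j)$. Pairing (\ref{dqd}) against $f_m$ and using $D(f_m)=b^k_mf_k$ gives $\bk{D^\ast(f^j)}{f_m}=\wh{D}(\zd^j_m)-b^j_m=-b^j_m$, that is $D^\ast(f^j)=-b^j_lf^l$. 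Applying $\zi$ turns this into $D^c(z^j)=-b^j_lz^l$, which identifies the vertical coefficients and reassembles into $D^c=\zs^a\pa_{x^a}-z^ib^k_i\pa_{z^k}$, as claimed.

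There is no serious obstacle here: the content is entirely structural once the preceding correspondence is in hand, and the single point demanding care is the bookkeeping of the pairing convention in (\ref{dqd}), which is precisely what produces the minus sign in front of $z^ib^k_i\pa_{z^k}$. As a consistency check one could instead verify (\ref{cl}) by hand, testing its right-hand side on linear functions $\zi(\za)$ and on pullbacks of functions on $M$ and matching the results against $\zi(D^\ast(\za))$ and $\wh{D}$ respectively; but routing the argument through $\zq^{D^\ast}$ makes both the existence and the formula fall out simultaneously and is the cleaner route.
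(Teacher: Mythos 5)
Your proposal is correct and follows exactly the route the paper intends: the paper states this theorem without proof, treating it as an immediate consequence of the preceding correspondence $D\mapsto\zq^{D}$ applied to the dual quasi-derivation $D^\ast$, which is precisely your argument. Your coordinate computation $D^\ast(f^j)=-b^j_lf^l$ (hence $D^c(z^j)=-z^lb^j_l$) together with projectability onto $\wh{D}=\zs^a\pa_{x^a}$ correctly recovers formula (\ref{cl}), including the minus sign.
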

Note that $\dd_T^\Pi(X)$ of a section $X$ of a skew algebroid is defined as $(ad_X^\Pi)^c$ and, like in \cite{GU1,GU3,GU}),
the vector field $D^c$ can be characterized also as the unique vector field on $A$ such that, for each section $Y$ of $A$, the vertical part of $D^c$ with respect to the decomposition of \ $T A_{|Y(M)}$ into vertical vectors and vectors tangent to the image $Y(M)$ of $Y$ is the negative of the vertical lift, $-D(Y)^v$, of the section $D(Y)$:
\be\label{vii}\forall\ x\in M\ \left(D^c+D(Y)^v\right)(Y(x))\in T_{Y(x)}Y(M)\,.
\ee
Since the complete lifts are linear vector fields, with every time dependent quasi-derivation $D=\{ t\mapsto D_t\}$ we can associate a (local) one-parameter family $t\mapsto \exp_t(D)$ of automorphisms of the vector bundle $A$ such that
$$\left.\frac{\xd}{\xd t}\right| _{t=t_0}\exp_t(D)(v)=D^c_{t_0}(v)\,,\quad \exp_0(D)=id_A\,.$$
The trajectories $t\mapsto \exp_t(D)(v)$ of the time-dependent vector field $t\mapsto D^c_{t}$ on $A$ project onto trajectories of the time-dependent vector field $t\mapsto \wh{D_t}$ on $M$.

\bigskip
For connections on line bundles, exactly like in the case of a Lie algebroid, we have the following.
\begin{theorem} Let $\nabla$ be an $E$-connection on a line bundle $L$ and let $\zs$ be a nowhere vanishing section of $L$.
Then, there is a section $\ph^\nabla_\zs$ of $E^*$ (called the {\sl characteristic form of $\zs$}) such that,
for all $X\in\Sec(E)$,
\be\label{cf}\nabla_X\zs=\langle X,\ph^\nabla_\zs\rangle\zs\,.\ee
Moreover, if $\zs'=e^f\cdot\zs$ for some smooth function $f\in C^\infty(M)$, then
$\ph^\nabla_{\zs'}=\ph^\nabla_\zs+\xdp f$.
\end{theorem}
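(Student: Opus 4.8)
The plan is to exploit the fact that a nowhere-vanishing section of a line bundle is a global frame, so that the connection becomes, pointwise, multiplication by a scalar; the content of the theorem is then that this scalar depends $C^\infty(M)$-linearly on the algebroid section and hence assembles into an element of $E^*$. First I would note that, since $L$ has rank one and $\zs$ vanishes nowhere, every section of $L$ is uniquely of the form $g\cdot\zs$ with $g\in C^\infty(M)$. In particular, for each $X\in\Sec(E)$ the section $\nabla_X\zs\in\Sec(L)$ can be written uniquely as $\nabla_X\zs=\ph(X)\cdot\zs$, and this defines a map $\ph\colon\Sec(E)\ra C^\infty(M)$, $X\mapsto\ph(X)$.

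Next I would check that $\ph$ is tensorial, i.e. $C^\infty(M)$-linear in $X$, so that it is represented by a section $\ph^\nabla_\zs$ of $E^*$ through $\la X,\ph^\nabla_\zs\ran=\ph(X)$; this is exactly the statement (\ref{cf}). Additivity of $\ph$ in $X$ is immediate from the additivity of $X\mapsto\nabla_X$. For homogeneity, the defining property $\nabla_{fX}\zs=f\nabla_X\zs$ of an $E$-connection gives $\ph(fX)\,\zs=f\,\ph(X)\,\zs$, and cancelling the nowhere-vanishing $\zs$ yields $\ph(fX)=f\,\ph(X)$. This is the one step where the nowhere-vanishing hypothesis is genuinely used, and it is the crux of the argument; observe that no Leibniz (anchor) term appears, precisely because $f$ multiplies the first argument of $\nabla$, in which the connection is $C^\infty(M)$-linear rather than a quasi-derivation.

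Finally, for the transformation law under $\zs'=e^f\zs$, I would apply the Leibniz rule (\ref{cqd}) together with the chain rule $\zr(X)(e^f)=e^f\,\zr(X)(f)$:
\be\nabla_X\zs'=\zr(X)(e^f)\,\zs+e^f\nabla_X\zs=e^f\bigl(\zr(X)(f)+\la X,\ph^\nabla_\zs\ran\bigr)\zs=\bigl(\zr(X)(f)+\la X,\ph^\nabla_\zs\ran\bigr)\zs'\,.\ee
Reading off the coefficient of $\zs'$ gives $\la X,\ph^\nabla_{\zs'}\ran=\zr(X)(f)+\la X,\ph^\nabla_\zs\ran$ for every $X$. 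It then remains only to recognise the extra term as the de Rham derivative: by (\ref{r3}) the one-form $\xdp f\in\A^1(E^*)=\Sec(E^*)$ satisfies $\la X,\xdp f\ran=\zr(X)(f)$ for all $X\in\Sec(E)$. Since $X$ is arbitrary, this forces $\ph^\nabla_{\zs'}=\ph^\nabla_\zs+\xdp f$, completing the proof. The argument never invokes the Jacobi identity, so it is valid for an arbitrary skew algebroid; the only mildly substantive points are the cancellation of $\zs$ in the tensoriality check and the identification of $\zr(X)(f)$ with pairing against $\xdp f$.
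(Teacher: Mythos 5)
Your proof is correct and is exactly the standard argument the paper is alluding to: the paper gives no proof of this theorem, merely remarking that it holds ``exactly like in the case of a Lie algebroid.'' Your steps --- trivializing $L$ by the nowhere-vanishing $\zs$, checking $C^\infty(M)$-linearity of $X\mapsto\ph(X)$ using $\nabla_{fX}=f\nabla_X$ so that $\ph$ is a section of $E^*$, and then deriving the gauge transformation from the Leibniz rule (\ref{cqd}) together with $\la X,\xdp f\ran=\zr(X)(f)$ --- are precisely the expected ones, and, as you note, nowhere use the Jacobi identity.
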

In view of the above theorem, the class of $\ph^\nabla_\zs$ in in the quotient space $[\A^1(E^*)]=\A^1(E^*)/\xdp\A^0(E^*)$ does not depend on
the choice of the section $\zs$ trivializing the bundle $L$. We call it the {\it characteristic class} of the
connection $\nabla$ and denote, with some abuse of notation, $\ch(L)$. If $E$ is a Lie algebroid and $\nabla$ is a representation, $\nabla_X\nabla_{X'}-\nabla_{X'}\nabla_{X}=\nabla_{[X,X']_\zP}$, then the characteristic form is closed, $\xdp\ph^\nabla_\zs=0$, so $\ch(L)\in H^1(E)$.

Note that if $\nabla_i$ is an $E$-connection on a line bundle $L_i$ over $M$, $i=1,2$, then $\nabla=\nabla_1\ot Id+Id\ot\nabla_2$ is an $E$-connection on $L_1\ot L_2$ and $\ch(L_1\ot L_2)=\ch(L_1)+\ch(L_2)$. This allows us to define $\ch(L)$ even
when $L$ is not trivializable: as $L\ot L$ is trivializable, we put $\ch(L)=\frac{1}{2}\ch(L\ot L)$. We can
also use sections $|\zs|$ of the `absolute value' bundle $|L|=L/\Z_2$ (cf. \cite{GMM}). Since
$\ph^\nabla_\zs=\ph^\nabla_{-\zs}=\ph^\nabla_{|\zs|}$, and since there is a nowhere-vanishing section $|\zs_0|$
of $|L|$, we can define $\ch(L)$ as the class of $\ph(|\zs_0|)$, where
\be\label{cf0}\nabla_X\zs_0=\langle X,\ph^\nabla_{|\zs|}\rangle\zs_0\ee
for any local representative $\zs_0$  of $|\zs_0|$.

\section{Modular class of a skew algebroid}
\noindent For a skew algebroid $(E,\zP)$ consider the line bundle
$$L^E=\wedge^{\text{top}}E\otimes\wedge^{\text{top}}T^*M\,.$$
An important observation, analogous to the one in \cite{ELW}, is that there is a canonical line bundle
connection $\nabla^\zP$ on $L^E$:
\be\label{LDe}
\nabla^\zP_X(Y\ot\zm)=\cL_X(Y)\ot\zm+Y\ot\Ll_{\zr(X)}\zm\,,
\ee
where $\Ll$ is the standard Lie derivative along a vector field. The corresponding characteristic class
$\ch(L^E)\in[\A^1(E^\ast)]$ we call the {\it modular class} of the skew algebroid $E$ and denote $\mod(E,\Pi)$
(shortly, $\mod(E)$ if $\Pi$ is is fixed). We call the skew algebroid {\it modular} if its modular class
vanishes. The direct check shows that, for standard affine local coordinates $(x^a,y^i)$ in $E$ and for
the standard local section $\zs$ associated with these local coordinates,
\be\label{sec}\zs=e_1\we\dots\we e_n\otimes\xd
x^1\we\dots\we\xd x^m\,,
\ee
the modular form $\ph^\Pi_\zs:=\ph^{\nabla^\Pi}_\zs$ associated with $\Pi$ as in (\ref{SA}) reads
\begin{equation}\label{aa}
\ph^\Pi_\zs=\Bigl(\sum_k c^k_{ik}(x)+\sum_a\frac{\pa\zr^a_i}{\pa x^a}(x)\Bigr)e^i.
\end{equation}
Exactly like in the case of a Lie algebroid, we get easily the following.
\begin{theorem}
A skew algebroid $(E,\Pi)$ is modular if and only if there is a nowhere-vanishing section $|\zs|$ of $|L^E|$
such that $\cL_X|\zs|=0$ for all $X\in\Sec(E)$.
\end{theorem}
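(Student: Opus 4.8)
The plan is to show that the vanishing of the modular class $\mod(E)=\ch(L^E)$ is equivalent to the existence of a nowhere-vanishing invariant section of $|L^E|$, by reducing the statement to properties already established for the characteristic form $\ph^\nabla_\zs$ in the theorem on connections on line bundles. First I would fix a nowhere-vanishing local section $\zs$ of $L^E$ (such as the standard one in (\ref{sec})), so that the connection $\nabla^\Pi$ determines the characteristic form $\ph^\Pi_\zs$ via $\nabla^\Pi_X\zs=\la X,\ph^\Pi_\zs\ran\zs$. The key algebraic observation is that, for a section of the form $\zs'=e^f\zs$, the invariance condition $\nabla^\Pi_X\zs'=0$ for all $X$ reads $\la X,\ph^\Pi_\zs+\xdp f\ran=0$ for all $X$, which holds precisely when $\ph^\Pi_\zs=-\xdp f$, i.e. when $\ph^\Pi_\zs$ lies in the image $\xdp\A^0(E^*)$ of the de Rham derivative.

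Next I would translate this into the language of $|L^E|$. By the discussion preceding Section 3 concerning the absolute-value bundle, a nowhere-vanishing section $|\zs_0|$ always exists globally, and its characteristic form is well defined by (\ref{cf0}); moreover $\ph^\nabla_{|\zs|}$ transforms under $|\zs_0|\mapsto e^f|\zs_0|$ exactly as $\ph^\nabla_\zs$ does, by $\ph^\nabla_{\zs'}=\ph^\nabla_\zs+\xdp f$. Thus the condition $\cL_X|\zs|=\nabla^\Pi_X|\zs|=0$ for all $X$ is, after trivializing locally, the same pointwise condition $\ph^\Pi_{|\zs|}=0$, and one checks that $\cL_X$ acting on the section of $|L^E|$ coincides with the connection $\nabla^\Pi$, so invariance under all $\cL_X$ is invariance under all $\nabla^\Pi_X$.

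To assemble the equivalence: if $\mod(E)=0$, then by definition the class of $\ph^\Pi_\zs$ in $[\A^1(E^*)]=\A^1(E^*)/\xdp\A^0(E^*)$ vanishes, so there is $f\in C^\infty(M)$ with $\ph^\Pi_\zs=-\xdp f$; then $|\zs'|=|e^f\zs|$ has vanishing characteristic form and hence satisfies $\cL_X|\zs'|=0$ for all $X$. Conversely, if some nowhere-vanishing $|\zs|$ satisfies $\cL_X|\zs|=0$ for all $X$, then $\ph^\Pi_{|\zs|}=0$, which represents the zero class, and since the characteristic class is independent of the trivializing section, $\mod(E)=\ch(L^E)=0$.

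The main obstacle I anticipate is purely bookkeeping rather than conceptual: one must be careful that the argument producing $f$ with $\ph^\Pi_\zs=-\xdp f$ is global. Locally the equivalence $\ph^\Pi_\zs\in\xdp\A^0(E^*)\Leftrightarrow$ existence of an invariant $e^f\zs$ is immediate, but to obtain a \emph{global} nowhere-vanishing invariant section one genuinely needs the passage to the absolute-value bundle $|L^E|$, since $L^E$ need not be trivializable; the factor-of-two and sign identifications $\ph^\nabla_\zs=\ph^\nabla_{-\zs}=\ph^\nabla_{|\zs|}$ recorded before Section 3 are exactly what guarantees that a global nowhere-vanishing $|\zs_0|$ exists and carries a well-defined characteristic form. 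Once this global existence is in hand, the remainder is a direct unwinding of definitions and the transformation law from the line-bundle theorem, and the Jacobi identity plays no role, so the proof goes through verbatim in the skew (non-Lie) case.
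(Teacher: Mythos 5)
Your argument is correct and is exactly the intended one: the paper states this theorem without proof (``we get easily the following''), and the proof it has in mind is precisely your direct unwinding --- identifying $\cL_X$ on $|L^E|$ with $\nabla^\Pi_X$, using the transformation law $\ph^\nabla_{e^f\zs}=\ph^\nabla_\zs+\xdp f$, and passing to the absolute-value bundle $|L^E|$ to handle non-trivializability. No discrepancy to report.
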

\noindent We understand, clearly, that $\cL_X|\zs|$ is represented locally by $\cL_X\zs$ for any local sections $\zs$ of $L^E$ representing $|\zs|$. Note that with any nowhere-vanishing section $\zs$ of $L^E$ we can canonically associate a volume
form (a nowhere-vanishing top-rank form) $\wt{\zs}$ on $E^*$. In local coordinates,
$${\left(f(x)e_1\we\dots\we e_n\otimes\xd
x^1\we\dots\we\xd x^m\right)}^{\thicksim}=f(x)\xd\zx_1\we\dots\we \xd\zx_n\we\xd x^1\we\dots\we\xd x^m\,.$$
Similarly, with a nowhere-vanishing section $|\zs|$ of $|L^E|$ we associate a nowhere-vanishing density
$\wt{|\zs|}$.

The volume form $\wt{\zs}$ (density $\wt{|\zs|}$) is {\it homogeneous}. More precisely, it is homogeneous of
degree $n=\on{rank}E$ with respect to the Euler (Liouville) vector field $\zD=\zx_i\pa_{\zx^i}$ on $E^*$:
$$\Ll_\zD\wt{\zs}=n\wt{\zs}
$$
(for all $\zs$ representing $|\zs|$).
\begin{proposition} Any homogeneous volume form (density) on the dual $E^*$ of a skew algebroid $E$ equals
$\wt{\zs}$ (resp., $\wt{|\zs|}$) for a nowhere-vanishing section $\zs$ of $L^E$ (resp., a nowhere-vanishing
section $|\zs|$ of $|L^E$).
\end{proposition}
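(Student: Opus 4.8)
The plan is to exploit the fact that the assignment $\zs\mapsto\wt\zs$ is already known to land in the homogeneous volume forms (and is manifestly injective, being given by the explicit local formula $f(x)\,e_1\we\dots\we e_n\ot\xd x^1\we\dots\we\xd x^m\mapsto f(x)\,\xd\zx_1\we\dots\we\xd\zx_n\we\xd x^1\we\dots\we\xd x^m$), so that the content of the statement is precisely the \emph{surjectivity} of this map onto homogeneous volume forms. Since the correspondence $\zs\mapsto\wt\zs$ is canonical, and thus independent of the chosen trivialization, I would argue in a single affine chart and glue afterwards. So I start from an arbitrary homogeneous nowhere-vanishing top-degree form $\zw$ on $E^*$ and write it in the standard coordinates $(x^a,\zx_i)$ as
\be
\zw=g(x,\zx)\,\xd\zx_1\we\dots\we\xd\zx_n\we\xd x^1\we\dots\we\xd x^m\,,
\ee
with $g$ smooth and nowhere vanishing.

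The key step is to translate the homogeneity hypothesis into a condition on $g$. Using $\Ll_\zD\xd\zx_i=\xd\zx_i$ and $\Ll_\zD\xd x^a=0$ for the Euler field $\zD=\zx_i\pa_{\zx_i}$, the Leibniz rule yields
\be
\Ll_\zD\zw=\bigl(\zx_i\pa_{\zx_i}g+n\,g\bigr)\,\xd\zx_1\we\dots\we\xd\zx_n\we\xd x^1\we\dots\we\xd x^m\,.
\ee
If $\zw$ is homogeneous of degree $k$, this forces $\zx_i\pa_{\zx_i}g=(k-n)g$, i.e. $g$ is fiberwise homogeneous of degree $k-n$. Here I would observe that nowhere-vanishing plus smoothness pins the degree down to $k=n$: a positive fiber degree makes $g$ vanish on the zero section, while a negative one is incompatible with smoothness there. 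Hence every homogeneous nowhere-vanishing volume form on $E^*$ is automatically of degree $n$, and $\zx_i\pa_{\zx_i}g=0$.

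From $\zx_i\pa_{\zx_i}g=0$ and smoothness I would conclude that $g$ is constant along the fibers, so that letting $\zx\to 0$ gives $g(x,\zx)=g(x,0)=:g(x)$; thus locally $\zw=\wt\zs$ with $\zs=g(x)\,e_1\we\dots\we e_n\ot\xd x^1\we\dots\we\xd x^m$, and $\zs$ is nowhere vanishing because $\zw$ is. Finally, since $\wt{\cdot}$ is a canonical bijection, the local sections $\zs$ built over different charts must agree on overlaps and therefore patch into a global nowhere-vanishing section of $L^E$ with $\wt\zs=\zw$; the density case is identical upon replacing $L^E$ by $|L^E|$ and $\zw$ by its associated density. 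The homogeneity computation is the heart of the argument, and the step I expect to require the most care is the smoothness reasoning that simultaneously forces the degree to equal $n$ and removes the fiber dependence of $g$, together with the routine check that the locally reconstructed $\zs$ glues to a global section.
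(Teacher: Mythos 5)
Your proposal is correct and follows essentially the same route as the paper: both arguments reduce to the observation that $\Ll_\zD$-homogeneity forces the coefficient of the given volume form relative to a reference homogeneous volume to be annihilated by the Euler field, hence constant along fibres and therefore basic, which immediately exhibits the form as $\wt{\zs}$ for a nowhere-vanishing $\zs$. The only differences are cosmetic: the paper compares $\zW$ globally to $\wt{\zs_1}$ for a trivializing section $\zs_1$ (writing $\zW=e^{\zc}\wt{\zs_1}$) instead of working chart by chart and gluing, and it takes ``homogeneous'' to mean degree $n$ from the outset, whereas you additionally derive that degree from smoothness and nonvanishing.
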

\begin{proof}
We can assume that $L^E$ is trivializable, so there is a nowhere-vanishing section $\zs_1$ of $L^E$. If $\zW$
is a volume form, $\zW$ (or $-\zW$, does not matter) can be written as $\zW=e^\zc\cdot\wt{\zs_1}$ for some
$\zc\in C^\infty(E^*)$. Now, if $\zW$ is homogeneous, then
$$n\zW=\Ll_\zD\zW=\Ll_\zD(e^\zc\cdot\wt{\zs_1})=e^\zc\left(\zD(\zc)\wt{\zs_1}+n\wt{\zs_1}\right)=
\zD(\zc)\zW+n\zW\,,$$ so $\zD(\zc)=0$. Thus, $\zc$ is homogeneous of degree 0, so it is constant along fibres and
therefore is the pull-back $\zc=f\circ\zp$ of a function $f$ on the base. Hence, $\zW=\wt{e^f\zs_1}$ and the section
$\zs=e^f\zs_1$ is nowhere-vanishing.
\end{proof}

Let $\left(\ph^\Pi_{|\zs|}\right)^v$ be the vertical vector field on $E^*$ being the vertical lift of the
modular form associated with $|\zs|$. Let $\on{div}_{\wt{|\zs|}}$ denotes the divergence associated with
the density $\wt{|\zs|}$, i.e.
$$\Ll_\X\wt{|\zs|}=\on{div}_{\wt{|\zs|}}(\X)\wt{|\zs|}\,.$$
One can easily prove by direct calculations the following (cf. \cite{We}).
\begin{theorem}\label{mt0} For each Hamiltonian $H\in C^\infty(E^*)$,
$$\left(\ph^\Pi_{|\zs|}\right)^v(H)=\on{div}_{\wt{|\zs|}}\X_H\,.$$
\end{theorem}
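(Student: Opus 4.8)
The plan is to prove the identity by a direct computation in the standard affine coordinates $(x^a,\zx_i)$ on $E^*$, and then to promote the coordinate result to arbitrary trivializing sections by a covariance argument. The two ingredients are the local expression (\ref{aa}) for the modular form and the local formula (\ref{ham}) for $\X_H$.

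First I would take the standard section $\zs$ of (\ref{sec}), for which $\wt{|\zs|}$ is, up to sign, the coordinate density associated with $\xd\zx_1\we\cdots\we\xd\zx_n\we\xd x^1\we\cdots\we\xd x^m$; for such a density the divergence of a vector field is just the sum of the partial derivatives of its components. Reading off the $\pa_{\zx_j}$- and $\pa_{x^b}$-components of $\X_H$ from (\ref{ham}), I would compute $\on{div}_{\wt{|\zs|}}\X_H$ term by term. Since $\ph^\Pi_\zs$ has the form $(\cdots)_i\,e^i$, its vertical lift is $(\cdots)_i\,\pa_{\zx_i}$, so the right-hand side $(\ph^\Pi_{|\zs|})^v(H)$ equals $\bigl(\sum_k c^k_{ik}+\sum_a\pa\zr^a_i/\pa x^a\bigr)\,\pa H/\pa\zx_i$; the goal is to match this.

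The heart of the computation consists of three observations. Differentiating the term $c^k_{ij}\zx_k\,\pa H/\pa\zx_i$ with respect to $\zx_j$ produces, via $\pa\zx_k/\pa\zx_j=\zd_{kj}$, exactly the contribution $\sum_k c^k_{ik}\,\pa H/\pa\zx_i$, plus a term $c^k_{ij}\zx_k\,\pa^2H/\pa\zx_j\pa\zx_i$ that vanishes because $c^k_{ij}$ is antisymmetric in $(i,j)$ while the Hessian of $H$ in the $\zx$'s is symmetric. Differentiating the anchor term $\zr^b_i\,\pa H/\pa\zx_i$ with respect to $x^b$ yields the remaining contribution $\sum_a(\pa\zr^a_i/\pa x^a)\,\pa H/\pa\zx_i$, plus a mixed second-derivative term $\zr^b_i\,\pa^2H/\pa x^b\pa\zx_i$; this cancels exactly against the mixed term $-\zr^a_j\,\pa^2H/\pa\zx_j\pa x^a$ coming from differentiating $-\zr^a_j\,\pa H/\pa x^a$ with respect to $\zx_j$. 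What survives is precisely $(\ph^\Pi_{|\zs|})^v(H)$ as computed above, which establishes the theorem for the standard section.

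Finally, to reach an arbitrary nowhere-vanishing $|\zs|$, I would write $\zs'=e^g\zs$ with $g\in C^\infty(M)$ and compare the two shifts. On the left, the earlier theorem on characteristic forms gives $\ph^\Pi_{\zs'}=\ph^\Pi_\zs+\xdp g$, so the left-hand side changes by $(\xdp g)^v(H)$. On the right, $\wt{|\zs'|}=e^g\wt{|\zs|}$ gives $\on{div}_{\wt{|\zs'|}}\X_H=\on{div}_{\wt{|\zs|}}\X_H+\X_H(g)$. Since $g$ is pulled back from $M$, formula (\ref{ham}) yields $\X_H(g)=\zr^b_i\,(\pa g/\pa x^b)\,\pa H/\pa\zx_i$, which by (\ref{r3}) is exactly $(\xdp g)^v(H)$; hence both sides shift identically and the identity propagates to all $|\zs|$. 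I expect the only delicate point to be the index bookkeeping that makes the two mixed second-derivative terms cancel, together with the antisymmetry argument that eliminates the $\zx$-Hessian term; everything else is routine.
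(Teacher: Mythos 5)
Your computation is correct and is precisely the ``direct calculation'' the paper invokes (it offers no further proof beyond citing \cite{We}): the divergence of $\X_H$ with respect to the coordinate density picks up $\sum_k c^k_{ik}\,\pa H/\pa\zx_i$ from the antisymmetry of $c^k_{ij}$ against the symmetric $\zx$-Hessian, the mixed second-derivative terms cancel, and the surviving terms reproduce $(\ph^\Pi_\zs)^v(H)$ via (\ref{aa}). The covariance step under $\zs'=e^g\zs$, matching $(\xdp g)^v(H)$ with $\X_H(g)$ for basic $g$, correctly extends the identity to arbitrary nowhere-vanishing densities $|\zs|$.
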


Among all Hamiltonians we can distinguish those which are of {\it mechanical type}: the kinetic energy $+$ a
potential function. The kinetic energy is associated with a bundle (fiberwise) metric, i.e. a smooth family of scalar products
on fibers of $E^\ast$ (or $E$). Choosing a local basis of orthonormal sections, we can write in the
corresponding local coordinates
\be\label{mt} H(x,\zx)=\frac{1}{2}\sum_i\zx_i^2+V(x)\,.
\ee
Since, for any such Hamiltonian,
\be\label{Hmt}\left(\ph^\Pi_{|\zs|}\right)^v(H)=\Bigl(\sum_k c^k_{ik}(x)+\sum_a\frac{\pa\zr^a_i}{\pa x^a}(x)\Bigr)\frac{\pa H}{\pa \zx_i}=\Bigl(\sum_k c^k_{ik}(x)+\sum_a\frac{\pa\zr^a_i}{\pa x^a}(x)\Bigr)\zx_i\,,
\ee
the modular form $\ph^\Pi_{|\zs|}$ vanishes if and only if $\left(\ph^\Pi_{|\zs|}\right)^v(H)=0$ for a single
mechanical Hamiltonian. We thus get the following (cf. \cite{M,We}).
\begin{theorem}\label{H} Let $E$ be a skew algebroid. The following are equivalent:
\begin{itemize}
\item[(a)] $E$ is unimodular.
\item[(b)] There is a homogeneous density on $E^\ast$ which is invariant with respect to all Hamiltonian vector fields.
\item[(c)] There is a homogeneous density on $E^\ast$ which is invariant with respect to a certain hamiltonian vector field associated with a Hamiltonian of mechanical type.
\end{itemize}
\end{theorem}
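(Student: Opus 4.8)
The plan is to establish the cyclic chain of implications $(a)\Rightarrow(b)\Rightarrow(c)\Rightarrow(a)$, using the preceding results as building blocks. The core engine of the proof is Theorem~\ref{mt0}, which identifies the action of the vertical modular field $\left(\ph^\Pi_{|\zs|}\right)^v$ on a Hamiltonian $H$ with the divergence $\on{div}_{\wt{|\zs|}}\X_H$. This links the algebraic object (the modular form) to the dynamical invariance condition (vanishing divergence of Hamiltonian vector fields), so each implication reduces to tracking when the modular form vanishes.

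For $(a)\Rightarrow(b)$, I would argue as follows. If $E$ is unimodular, then $\mod(E)=0$, so the characteristic class of $\ch(L^E)$ vanishes in $[\A^1(E^*)]$; choosing an appropriate trivializing section $|\zs|$ of $|L^E|$ we may arrange $\ph^\Pi_{|\zs|}=0$ (using the freedom $\ph^\nabla_{\zs'}=\ph^\nabla_\zs+\xdp f$ from the characteristic-form theorem, which lets us absorb the exact part). The associated homogeneous density $\wt{|\zs|}$ on $E^*$ is then invariant under every Hamiltonian vector field, because Theorem~\ref{mt0} gives $\on{div}_{\wt{|\zs|}}\X_H=\left(\ph^\Pi_{|\zs|}\right)^v(H)=0$ for all $H$, whence $\Ll_{\X_H}\wt{|\zs|}=0$. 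The implication $(b)\Rightarrow(c)$ is immediate by specialization: a density invariant under \emph{all} Hamiltonian vector fields is in particular invariant under the one associated with a fixed mechanical Hamiltonian as in (\ref{mt}), and by Proposition~\ref{} the invariant homogeneous density is of the form $\wt{|\zs|}$.

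The crux is $(c)\Rightarrow(a)$, and this is where I expect the main obstacle to lie. Here one is given invariance under only a \emph{single} Hamiltonian vector field, and must deduce the global vanishing of the modular form. The key is the explicit computation (\ref{Hmt}): for a mechanical Hamiltonian, $\left(\ph^\Pi_{|\zs|}\right)^v(H)=\Bigl(\sum_k c^k_{ik}(x)+\sum_a\frac{\pa\zr^a_i}{\pa x^a}(x)\Bigr)\zx_i$, which is a linear function in the fibre coordinates $\zx_i$ whose coefficients are exactly the components of the modular form $\ph^\Pi_{|\zs|}$. Since the monomials $\zx_i$ are linearly independent as functions on each fibre, the vanishing of this single expression forces every coefficient $\sum_k c^k_{ik}(x)+\sum_a\frac{\pa\zr^a_i}{\pa x^a}(x)$ to vanish identically, i.e. $\ph^\Pi_{|\zs|}=0$ by the formula (\ref{aa}). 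By Theorem~\ref{mt0} again, invariance of $\wt{|\zs|}$ under $\X_H$ means precisely $\left(\ph^\Pi_{|\zs|}\right)^v(H)=0$, so this linear-independence argument is what converts the single-Hamiltonian hypothesis into the full algebraic conclusion. The one point requiring care is that the given homogeneous density in (c) is genuinely of the form $\wt{|\zs|}$ for some section of $|L^E|$ — but this is exactly guaranteed by Proposition~\ref{}, closing the loop and yielding $\mod(E)=0$.
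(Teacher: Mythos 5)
Your proposal is correct and follows essentially the same route as the paper: both rest on Theorem~3.4 (identifying $\operatorname{div}_{\wt{|\zs|}}\X_H$ with $\bigl(\ph^\Pi_{|\zs|}\bigr)^v(H)$), on Proposition~3.3 (every homogeneous density is $\wt{|\zs|}$ for a nowhere-vanishing section of $|L^E|$), and on formula (\ref{Hmt}) together with the linear independence of the fibre coordinates $\zx_i$ to pass from a single mechanical Hamiltonian to the vanishing of the modular form. The only difference is organizational (you prove the cyclic chain $(a)\Rightarrow(b)\Rightarrow(c)\Rightarrow(a)$ whereas the paper proves $(a)\Leftrightarrow(b)$ and $(a)\Leftrightarrow(c)$ separately), which is immaterial.
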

\begin{proof} $(a) \Leftrightarrow (b)$ \
We can assume that $L^E$ is trivializable and work with forms instead of densities. It is clear from
Theorem \ref{mt0} that if the skew-algebroid is modular, then there is a homogeneous volume form on $E^*$ invariant with
respect to all Hamiltonian vector fields. This is $\wt{\zs}$ for a nowhere-vanishing section of $L^E$ whose
modular form is 0.

We have also the converse. Indeed, if $\zW$ is a homogeneous volume form, we know that it can be written as
$\zW=\wt{\zs}$ for some nowhere-vanishing section $\zs$ of $L^E$. If $\zW$ is invariant with respect to all
the Hamiltonian vector fields $\X_H$, then $0=\on{div}_{\wt{\zs}}\X_H=\left(\ph^\Pi_\zs\right)^v(H)$ for all
Hamiltonians $H$. This means $\left(\ph^\Pi_\zs\right)^v=0$, thus $\ph^\Pi_\zs=0$, so the skew algebroid is
modular. Finally, $(a) \Leftrightarrow (c)$ follows immediately from (\ref{Hmt}).
\end{proof}

It is {\it a priori} possible that a non-modular skew algebroid admits a non-homoge\-neous volume form $\zW$
which is invariant with respect to all Hamiltonian vector fields. Since $\zW=e^\zc\wt{\zs}$ for some
nowhere-vanishing section $\zs$ of $L^E$, the property that $\zW$ is invariant with respect to all $\X_H$
reads
$$0=\on{div}_\zW(\X_H)=\on{div}_{\wt{\zs}}(\X_H)+\X_H(\zc)=
\left(\left(\ph^\Pi_\zs\right)^v-\X_\zc\right)(H)\,.$$ As $H$ is arbitrary,
$\left(\ph^\Pi_\zs\right)^v=\X_\zc$. If $\zc$ is a basic function, this implies that $\ph^\Pi_\zs$ is exact
and the algebroid is modular. It is, however, not excluded that $\left(\ph^\Pi_\zs\right)^v=\X_\zc$ for some
function $\zc$ which is not basic. Note that the vertical lift of $\xdp f$, for a basic functions $f$,
is automatically a Hamiltonian vector field, $\left(\xdp f\right)^v=-\X_f$. In this way we get the following.
\begin{theorem} There is a density on $E^*$ which is invariant
with respect to all Hamiltonian vector fields if and only if the vertical lift of a (thus any) modular form is a
Hamiltonian vector field.
\end{theorem}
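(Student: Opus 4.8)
The plan is to package the divergence computation already carried out immediately before the statement. Since the absolute-value bundle $|L^E|$ always admits a nowhere-vanishing section $|\zs|$, every nowhere-vanishing density on $E^*$ can be written globally as $\zW=e^\zc\,\wt{|\zs|}$ with $\zc\in C^\infty(E^*)$ (the quotient of two nowhere-vanishing densities is a positive function, hence an exponential). Expanding $\Ll_{\X_H}\zW$ and invoking Theorem \ref{mt0} gives, for every Hamiltonian $H$,
\[
\on{div}_\zW(\X_H)=\X_H(\zc)+\left(\ph^\Pi_{|\zs|}\right)^v(H)=\left(\left(\ph^\Pi_{|\zs|}\right)^v-\X_\zc\right)(H)\,,
\]
where the last equality uses $\X_H(\zc)=-\X_\zc(H)$, i.e. the skew-symmetry of $\zP$. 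The whole statement then reduces to the solvability of the single vector-field equation $\left(\ph^\Pi_{|\zs|}\right)^v=\X_\zc$.

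For the forward implication I would argue as follows: if $\zW$ is invariant under every $\X_H$, then $\on{div}_\zW(\X_H)=0$, so $\left(\left(\ph^\Pi_{|\zs|}\right)^v-\X_\zc\right)(H)=0$ for all $H\in C^\infty(E^*)$. Since a vector field annihilating every smooth function vanishes, this forces $\left(\ph^\Pi_{|\zs|}\right)^v=\X_\zc$, so the vertical lift of the modular form is Hamiltonian. For the converse, if $\left(\ph^\Pi_{|\zs|}\right)^v=\X_\zc$ for some $\zc\in C^\infty(E^*)$, I would simply put $\zW=e^\zc\,\wt{|\zs|}$; the same identity yields $\on{div}_\zW(\X_H)=0$ for every $H$, so $\zW$ is the desired invariant density.

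To justify the \emph{(thus any)} clause I would note that two modular forms differ by an exact term: by the characteristic-form theorem, a different choice $|\zs'|$ gives $\ph^\Pi_{|\zs'|}=\ph^\Pi_{|\zs|}+\xdp f$ for a basic $f\in C^\infty(M)$. Because vertical lift is $\R$-linear and $(\xdp f)^v=-\X_f$ is already Hamiltonian, one has $\left(\ph^\Pi_{|\zs'|}\right)^v=\left(\ph^\Pi_{|\zs|}\right)^v-\X_f$, which is Hamiltonian precisely when $\left(\ph^\Pi_{|\zs|}\right)^v$ is; so the property is independent of the trivializing section.

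The computation itself is routine and is essentially written out in the text preceding the statement; the only points requiring care are the global existence of $\zc$ (secured by working with $|L^E|$, which always has a nowhere-vanishing section, rather than with $L^E$) and the elementary passage from ``$(\cdots)(H)=0$ for all $H$'' to the vector-field identity. I therefore expect no serious obstacle: the substance lies entirely in the divergence identity displayed above, and the theorem is its clean repackaging, with the new subtlety that $\zc$ need not be basic, so that an invariant density may exist even for a non-modular skew algebroid.
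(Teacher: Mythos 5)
Your argument is correct and is essentially identical to the paper's own proof, which derives the same identity $\on{div}_\zW(\X_H)=\bigl(\left(\ph^\Pi_\zs\right)^v-\X_\zc\bigr)(H)$ from $\zW=e^\zc\wt{\zs}$ and Theorem \ref{mt0}, and reduces the statement to $\left(\ph^\Pi_\zs\right)^v=\X_\zc$. Your added care about working with $|L^E|$ for global existence and your explicit check of the ``(thus any)'' clause via $(\xdp f)^v=-\X_f$ only make explicit what the paper leaves implicit.
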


\begin{remark} Skew algebroids appear naturally in nonholonomic mechanics and the modular class in this context has been studied also in \cite{FGM}. Note also that there is an extensive literature devoted to mechanics on Lie algebroids, the program proposed by Weinstein \cite{We1} and developed by Mart\'\i nez and others (see the survey article \cite{LMM}). In \cite{GGU} and \cite{GG}, in turn, it was developed  a framework of geometric mechanics on {\it general algebroids}. The structure of an algebroid on a bundle $\tau:E\ra M$ allows one to develop
the Lagrangian formalism for a given Lagrangian function $L:E\ra\R$.
Moreover, if $E$ is an almost Lie algebroid, then the associated
Euler-Lagrange equations have a variational interpretation: a
curve $\gamma:[t_0,t_1]\ra E$ satisfies the Euler-Lagrange
equations if and only if it is an extremal of the action
$\mathcal{J}(\gamma):=\int_{t_0}^{t_1}L(\gamma(t))\dd t$
restricted to those $\gamma$'s which are admissible and belong to
a fixed $E$-homotopy class \cite{GG}.

In \cite{GLMM} it has been shown that if $D\subset E$ is a
subbundle (nonholonomic constraint) and $L$ is of mechanical type (that is $L(x,v)=\frac 12
\mu(v,v)-V(x)$, where $\mu$ is a bundle metric on $E$ and $V$ is an
arbitrary function on the base), then nonholonomically constrained
Euler-Lagrange equations associated with $D$ can be obtained as
unconstrained Euler-Lagrange equations on the skew-algebroid
$\left(D,\rho_{|D},[\cdot,\cdot]_\Pi^D:=\PP_D[\cdot,\cdot]_\Pi\right)$,
where $\PP_D:E\ra D$ denotes the projection orthogonal w.r.t. $\mu$. In other words, the skew algebroid structure is obtained simply by projecting the tensor $\Pi$ on $D$ by means of $TD$. Note that the skew algebroid bracket
$[\cdot,\cdot]^D_\Pi$ needs not to satisfy Jacobi identity even if
$[\cdot,\cdot]_E$ does.
\end{remark}
\begin{example}
To give a concrete example let us study the Chaplygin
sleigh. It is an example of a nonholonomic system on the Lie
algebra $\mathfrak{se}(2)$ which describes a rigid body sliding on
a plane. The body is supported in three points, two of which
slides freely without friction, while the third point is a knife
edge. This imposes the constraint of no motion orthogonal to this
edge (see \cite{Cha}).

The configuration space before reduction is the Lie group
$G=SE(2)$ of the Euclidean motions of the 2-dimensional plane
$\R^2$. Elements of the Lie algebra $\mathfrak{se}(2)$ are of the
form

$$\hat{\xi}=
\begin{pmatrix}
0&\omega&v_1\\
-\omega&0&v_2\\
0&0&0
\end{pmatrix}=v_1E_1+v_2E_2+\omega E_3,
$$
where $[E_3,E_1]=E_2$, $[E_2, E_3]=E_1$, and  $[E_1, E_2]=0$.

The system is described by the purely kinetic Lagrangian function
$L:\mathfrak{se}(2)\ra\R$,
$$L(v_1, v_2, \omega)=\frac{1}{2}\left[ (J+m(a^2+b^2))\omega^2 + mv_1^2+m v_2^2-2bm\omega v_1-2am\omega v_2\right].$$
Here,  $m$ and $J$ denote the mass and the moment of inertia of
the sleigh relative to the contact point, while $(a, b)$
represents the position of the center of mass with respect to the
body frame, determined by placing the origin at the contact point
and the first coordinate axis in the direction of the knife axis.
Of course, as the Lie algebra $\mathfrak{se}(2)$ is modular, the free dynamics admits an invariant volume for the corresponding Hamiltonian. However, our  system is subjected to the nonholonomic
constraint determined by the linear subspace
$$ D=\{(v_1, v_2, \omega)\in \mathfrak{se}(2)\; |\; v_2=0\}\subset\mathfrak{se}(2).$$
Instead of $\{E_1, E_2, E_3\}$ we take another basis of
$\mathfrak{se}(2)$:
$$
e_1=E_3, e_2=E_1, e_3= -ma E_3-mab E_1+(J+ma^2) E_2\,,$$ adapted
to the decomposition $D\oplus D^\perp$; $D=\hbox{span }\{ e_1,
e_2\}$ and $D^\perp=\hbox{span }\{ e_3\}$. The induced
skew-algebroid structure on $D$ is given by
$$
[e_1, e_2]_{D}=\frac{ma}{J+ma^2} e_1+\frac{mab}{J+ma^2}e_2.$$
Therefore, the structural constant are ${\mathcal C}^1_{12}=\frac{ma}{J+ma^2}$ and
${\mathcal C}^2_{12}=\frac{mab}{J+ma^2}$. The skew algebroid $D$ is
in this simple case a Lie algebra, as any skew algebra in dimension 2 is Lie.
According to (\ref{aa}),
$$\mod(D)=\frac{mab}{J+ma^2}e^1-\frac{ma}{J+ma^2}e^2\,,$$
so $\mod(D)\ne 0$ and the associated Hamiltonian does not admit an invariant volume.
\end{example}

\section{Supergeometric description}
Our aim now is to express the modular class $\mod(E)$ directly in terms of the degree 1 vector field $\xdp$ on
the graded manifold $E[1]$. To this end the sheaf of sections of $L^E$ has to be replaced by the {\it
Berezinian sheaf} \ $\mathrm{Ber}=\mathrm{Ber}(E[1])$ on the supermanifold $E[1]$ whose nowhere-vanishing
sections are {\it Berezinian volumes} (cf. \cite{HM1,HM2}). A homogeneous Berezinian volume $s$ defines a
divergence $\div_s$ of a homogeneous vector field $X$ by the formula (see \cite{KSM})
$$ \cL_Xs=(-1)^{|X||s|}s\cdot\div_s(X)\,.$$
Here $\cL_Xs$ is the Lie derivative of the Berezinian volume understood as a differential operator on $\A$
defined by $\cL_Xs=-(-1)^{|X||s|}s\circ X$.
\medskip
Over a superdomain $\U$ with supercoordinates $\uu=(x^1,\ldots,x^m,\y^1,\ldots,\y^n)$ the (right)
$\A(\U)$-module $\mathrm{Ber}(\U)$ is generated by
$$s=d^{\,m|n}\uu=dx^1\wedge\ldots\wedge dx^m\otimes \pa_{\y^n}\circ\ldots\circ\pa_{\y^1}\,,$$
$$\mathrm{Ber}(\U)=\Sec(\U,\mathrm{Ber}):=d^{\,m|n}\uu\cdot\A(\U)\,,
$$
and the corresponding divergence of a homogeneous vector field $X=\sum_ag_a\pa_{x^a}+\sum_ih_i\pa_{\y^i}$
reads (cf. \cite{KSM})
\be\label{div}\div_s(X)=\sum_a\frac{\pa g_a}{\pa x^a}-(-1)^{|X|}\sum_i\frac{\pa h_i}{\pa\y^i}\,,
\ee
where $|X|$ is the degree of $X$.

\medskip
It is easy to see that with any section $\zs$ of the line bundle
$L^E=\wedge^{\text{n}}E\otimes\wedge^{\text{m}}T^*M$ one can associate a Berezinian section $s_\zs$. In other
words, we have an embedding $S$ of the space of sections of $L^E$ into the space of Berezinian sections. In
local coordinates this embedding reads
$$S:\quad f(x)\,e_n\wedge\ldots\wedge e_1\otimes \dd x^1\wedge\ldots\wedge \dd x^m\ \mapsto \ f(x)\,\dd x^1\wedge\ldots\wedge \dd x^m\otimes
\pa_{\y^n}\circ\ldots\circ\pa_{\y^1}\,,$$ where $e_1,\ldots,e_q$ is a basis of local sections of $E$ and
$\pa_{\y^i}$ is the derivation of $\A(E^\ast)$ being the contraction of a section of $\wedge^\bullet E^*$ with
$e_i$.

Recall that for any nowhere-vanishing section $|\zs|$ of the bundle $|L^E|$ we can find an open covering $\{ U_\alpha\}$ on $M$
and local nowhere-vanishing sections $\zs_\alpha$ of $L^E_{|U_\alpha}$ such that $\zs_\alpha=\pm
\zs_{\alpha'}$ on $U_\alpha\bigcap U_{\alpha'}$. Hence, $S(\zs_\alpha)=\pm S(\zs_{\alpha'})$ over
$U_\alpha\bigcap U_{\alpha'}$. But the divergence $\div_s$ does not depend on the sign of $s$, so the
collection $|s|=S(|\sigma|)$ of local nowhere-vanishing Berezinian volumes $S(\zs_\alpha)$ gives rise to a
super(divergence) $\div_{|s|}$ on $E[1]$. Note that $|s|$ can be viewed as a nowhere-vanishing section of the
sheaf \ ${\frak D}$ of {\it Berezinian $1$-densities} defined as the Berezinian sheaf twisted by on orientation
sheaf. In the literature the sections of ${\frak D}$ are sometimes referred to as {\it non-oriented
Berezinian sections}. It is easy to see that if $|s'|=f|s|$,  with an even nowhere-vanishing function $f$, is
another nowhere-vanishing Berezinian 1-density, then $\div_{|s'|}(X)=\div_{|s|}(X)+X(\ln(f))$.

If we assume now that $(E,\Pi)$ is a skew algebroid and $\xdp$ is the corresponding vector field on $E[1]$
(see (\ref{sdR})), then, according to (\ref{div}), it is easy to see in local coordinates that, for the
standard local section $\zs$ of $L^E$ associated with local coordinates (see (\ref{sec})),
\be\label{sdiv}\div_{S(\zs)}(\xdp)=\Bigl(\sum_k c^k_{ik}(x)+\sum_a\frac{\pa\zr^a_i}{\pa x^a}(x)\Bigr)\y^i\,.
\ee
The latter superfunction represents clearly the local section $\ph^\Pi_\zs$ of $E^\ast$. Moreover, if $\zs'=e^f\zs$
for some function $f\in C^\infty(M)$, then $S(\zs')=e^fS(\zs)$ and
$$\div_{S(\zs')}(\xdp)=\div_{S(\zs)}(\xdp)+\xdp f$$
that represents the section $\ph^\Pi_\zs+\xdp f$ of $E^\ast$. All this works well globally for 1-densities and
we get the following.
\begin{theorem} Let $(E,\Pi)$ be a skew algebroid and $\xdp$ be the de Rham vector field on the supermanifold $E[1]$ corresponding to $\Pi$. Then,
the modular class $\mod(E)$ is represented by the degree-1 function $\div_{|s|}(\xdp)$ being the
(super)divergence of $\xdp$ with respect to any homogeneous nowhere-vanishing Berezinian 1-density $|s|$ on $E[1]$
(associated canonically with a nowhere-vanishing section $|\zs|$ of the line-bundle $|L^E|$).
\end{theorem}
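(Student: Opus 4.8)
The plan is to read the statement off the two local computations already recorded---the super-divergence formula (\ref{sdiv}) and the expression (\ref{aa}) for the modular form---and then to promote the resulting identity of local representatives to an identity of classes by verifying that both sides obey the same transformation law. The conceptual input is the standard identification of the Grassmann algebra $\A(E^*)$ with the functions on $E[1]$, under which $\A^1(E^*)=\Sec(E^*)$ is exactly the space of degree-$1$ functions and the local generator $e^i$ corresponds to the odd supercoordinate $\y^i$; under this identification $\xdp$ is the de Rham vector field (\ref{sdR}).

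First I would fix a nowhere-vanishing section $|\zs|$ of $|L^E|$ and form its canonical Berezinian $1$-density $|s|=S(|\zs|)$. Working over a coordinate chart with the standard local section $\zs$ of (\ref{sec}), I would apply the divergence formula (\ref{div}) to $\xdp$ as given by (\ref{sdR}). The horizontal part $\sum_b(\zr^b_i\y^i)\pa_{x^b}$ contributes $\sum_a\frac{\pa\zr^a_i}{\pa x^a}\y^i$, while the vertical part $\sum_k(\tfrac12 c^k_{ij}\y^j\y^i)\pa_{\y^k}$, after differentiating the odd quadratic coefficient and using the skew-symmetry $c^k_{ij}=-c^k_{ji}$ together with $|\xdp|=1$, contributes $+\sum_k c^k_{ik}\y^i$. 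This reproduces (\ref{sdiv}), and under $\y^i\leftrightarrow e^i$ it is literally the local modular form $\ph^\Pi_\zs$ of (\ref{aa}); hence $\div_{|s|}(\xdp)$ represents $\ph^\Pi_{|\zs|}$ on each chart.

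Next I would globalize and check well-definedness. Note first that homogeneity with respect to the Euler field on $E[1]$ forces the generating function of $|s|$ to be basic, so $|s|$ is indeed canonical, $|s|=S(|\zs|)$, exactly parallel to the Proposition for homogeneous volume forms. Over overlaps the local sections satisfy $\zs_\alpha=\pm\zs_{\alpha'}$, and since the Berezinian divergence is insensitive to the sign of the density, the local superfunctions $\div_{S(\zs_\alpha)}(\xdp)$ agree and glue to a global degree-$1$ function $\div_{|s|}(\xdp)$; simultaneously the $\ph^\Pi_{\zs_\alpha}$ glue to the global modular form $\ph^\Pi_{|\zs|}$, so the two global objects coincide under the identification. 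Finally, any other admissible choice is $|\zs'|=e^f|\zs|$ with $f\in C^\infty(M)$ basic, whence $|s'|=e^f|s|$ and the transformation rule for the super-divergence gives $\div_{|s'|}(\xdp)=\div_{|s|}(\xdp)+\xdp f$; since $f$ is basic this alters the representative by an element of $\xdp\A^0(E^*)$, matching the classical rule $\ph^\Pi_{\zs'}=\ph^\Pi_\zs+\xdp f$ from the characteristic-form theorem. Hence the class of $\div_{|s|}(\xdp)$ in $[\A^1(E^*)]$ is independent of all choices and equals $\ch(L^E)=\mod(E)$.

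The main obstacle is purely the sign-and-ordering bookkeeping in the vertical term of (\ref{div}): one must track the factor $\tfrac12$ in (\ref{sdR}), the left-derivative rule $\pa_{\y^k}(\y^j\y^i)=\delta^j_k\y^i-\delta^i_k\y^j$ for odd variables, the skew-symmetry of the structure functions, and the sign $-(-1)^{|\xdp|}=+1$, so that the two halves of the $c$-term add rather than cancel and land on $\sum_k c^k_{ik}\y^i$ with the sign agreeing with (\ref{aa}). Everything else is the gluing and transformation argument above, which runs in parallel to the classical construction of $\mod(E)$.
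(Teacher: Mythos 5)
Your proposal is correct and follows essentially the same route as the paper: the local computation of $\div_{S(\zs)}(\xdp)$ via (\ref{div}) applied to (\ref{sdR}), its identification with the local modular form (\ref{aa}), the sign-insensitive gluing of the $S(\zs_\alpha)$ into a $1$-density, and the transformation rule $\div_{S(\zs')}(\xdp)=\div_{S(\zs)}(\xdp)+\xdp f$ matching $\ph^\Pi_{\zs'}=\ph^\Pi_\zs+\xdp f$. Your sign bookkeeping in the vertical term (the factor $\tfrac12$, the left odd derivative, and $-(-1)^{|\xdp|}=+1$) is accurate, and the remark that homogeneity forces $|s|=S(|\zs|)$ is a small but sound addition paralleling the paper's Proposition on homogeneous volume forms.
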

\begin{remark} We found that the above interpretation of the modular class in the case of Lie algebroids have been indicated by Kontsevich \cite{Kn}. Note also that superdivergences have been used in constructing generating operators for Lie algebroids in \cite{KSM}, and in defining modular class for even symplectic graded manifolds in \cite{MV}.
\end{remark}

\section{Relative modular class}
Let now $E_0$ be a skew subalgebroid in our skew algebroid $(E,[\cdot,\cdot]_\Pi,\zr)$. This means that
(see \cite{GG1,Ma}, although it seems that the third condition in \cite[Definition 4.3.14]{Ma}
is superfluous):
\begin{itemize}
\item[(a)] $E_0$ is a vector subbundle in $E$ supported on a submanifold $M_0$ of $M$; \item[(b)] The anchor
$\zr:E\ra T M$ maps $E_0$ into $T M_0\subset T M$, so that the bracket $[\cdot,\cdot]_\Pi$ is well defined
for sections of $E_0$, as its value over $M_0$ does not depend on extensions of these sections; \item[(c)] Sections of $E_0$ are closed with respect to the bracket $[\cdot,\cdot]_\Pi$.
\end{itemize}

It is easy to see that the restriction of the bracket $[\cdot,\cdot]_\Pi$ to sections of $E_0$ is a skew
algebroid bracket associated with a certain bivector field $\Pi_0$ on $E_0^\ast$. In other words, the above
conditions mean that the tensor $\Pi$ is tangent to the submanifold $E^\ast_{|M_0}$ and projectable onto a
linear bivector field $\Pi_0$ on $E_0^\ast\simeq E^\ast_{|M_0}/E_0^\perp$, where $E_0^\perp\subset
E^\ast_{|M_0}$ is the annihilator of $E_0$. The association $\Pi\mapsto\Pi_0$ is a simple example of a Poisson
reduction. It is also clear that $E_0$ inherits from $E$ a structure of a skew algebroid that is associated
with $\Pi_0$.

All this can be immediately seen in local coordinates in terms of the structure functions $\zr(x)$ and $c(x)$. We can
choose affine coordinates
$$(x^\za,x^A,y^\zi,y^I)=(x^1,\dots,x^{m_0},x^{m_0+1},\dots,x^m,y^1,\dots,y^{n_0},y^{n_0+1},\dots,y^n)$$
in a neighbourhood of a fiber of $E_0$, so that  points of $M_0$ in $M$ are characterized by $x^A=0$, and
points of $E_0$ in $E$ by $x^A=0,y^I=0$. The fact that $E_0$ is a subalgebroid means that $\zr_\zi^A(x^a,0)=0$
and $c_{\zi\zi'}^K(x^a,0)=0$, so that
$$\Pi_0(x^\za)=c^\zk_{\zi\zi'}(x^\za,0)\zx_\zk
\partial _{\zx_\zi}\otimes \partial _{\zx_{\zi'}} + \zr^\zb_\zi(x^\za,0) \partial _{\zx_\zi}\we\pa_{x^\zb}\,.
$$
It is also easy to see that subbundles of $E$ (supported over submanifolds) are graded
submanifolds of $E[1]$, so that we have the following equivalent descriptions of a skew subalgebroid.
\begin{theorem}\label{th1} Let $E_0$ be a subbundle, supported on $M_0\subset M$, of a skew algebroid $(E,\Pi)$ over $M$ (equivalently, a graded submanifold $E_0[1]$ of the N-manifold $\cM=E[1]$). The following are equivalent:
\begin{itemize}
\item[(a)] $E_0$ is a subalgebroid with a skew algebroid structure induced by a bivector $\Pi_0$ on
$E_0^\ast$.

\item[(b)] The annihilator $E_0^\perp$ is a coisotropic submanifold of the Poisson manifold $(E^\ast,\Pi)$.

\item[(c)] The (super)vector field $\xdp$ is tangent to the submanifold $E_0[1]$ of $E[1]$ and $\xd^{\Pi_0}$
is the restriction $\xdp_{|E_0[1]}$.

\item[(d)] the canonical inclusion $j_{E_0}:E_0\hookrightarrow E$ is a morphism of skew algebroids, i.e., the
induced restriction map $j_{E_0}^\ast:\A(E^\ast)\ra\A(E_0^\ast)$ intertwines the derivations $\xdp$ and
$\xd^{\Pi_0}$,
\be\label{sam} j_{E_0}^\ast\circ\xdp=\xd^{\Pi_0}\circ\ j_{E_0}^\ast\,.
\ee
\end{itemize}
\end{theorem}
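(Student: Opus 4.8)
The plan is to measure all four conditions against the single pair of local equations that the preceding discussion has already attached to being a subalgebroid, namely
$$\zr^A_\zi(x^\za,0)=0,\qquad c^K_{\zi\zi'}(x^\za,0)=0\qquad(\star)$$
in the adapted coordinates $(x^\za,x^A,y^\zi,y^I)$, in which $M_0=\{x^A=0\}$ and $E_0=\{x^A=0,\,y^I=0\}$. Since the text already records that (a) is equivalent to $(\star)$, it will suffice to show that each of (b), (c), (d) is likewise equivalent to $(\star)$ and, along the way, that the induced objects coincide with $\Pi_0$ and $\xd^{\Pi_0}$.

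First I would treat (a)$\,\Leftrightarrow\,$(b). The annihilator $E_0^\perp\subset E^\ast_{|M_0}$ is cut out by $x^A=0$ and $\zx_\zi=0$, with $(x^\za,\zx_K)$ as its free coordinates, so its conormal bundle is spanned by the differentials $\xd x^A$ and $\xd\zx_\zi$. Using the Hamiltonian formula (\ref{ham}) I would compute
$$\X_{x^A}=-\zr^A_j(x)\,\partial_{\zx_j},\qquad \X_{\zx_\zi}=c^k_{\zi j}(x)\zx_k\,\partial_{\zx_j}+\zr^b_\zi(x)\,\partial_{x^b},$$
and read off when they are tangent to $E_0^\perp$. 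Tangency forces $\zr^A_\zi(x^\za,0)=0$ (seen from the $\partial_{\zx_\zi}$-component of $\X_{x^A}$, equally the $\partial_{x^A}$-component of $\X_{\zx_\zi}$), and forces the $\partial_{\zx_{\zi'}}$-component of $\X_{\zx_\zi}$, which on $E_0^\perp$ reduces to $c^K_{\zi\zi'}\zx_K$, to vanish for every $\zx_K$, i.e. $c^K_{\zi\zi'}(x^\za,0)=0$; conversely $(\star)$ yields tangency of the whole conormal bundle. Since coisotropy of a submanifold for a bivector means exactly that $\Pi^\sharp$ carries the conormal bundle into the tangent bundle --- equivalently, that the vanishing ideal is closed under $\{\cdot,\cdot\}_\Pi$, a description that uses only the Leibniz rule and hence survives the loss of the Jacobi identity --- this is precisely (b).

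Next I would handle (a)$\,\Leftrightarrow\,$(c). Here $E_0[1]=\{x^A=0,\,y^I=0\}$, and tangency of the de Rham field (\ref{sdR}) is tested on these defining functions: $\xdp(x^A)=\zr^A_i\y^i$ and $\xdp(\y^K)=\tfrac12 c^K_{ij}\y^j\y^i$ must both vanish after setting $\y^I=0$, which returns $(\star)$ once more. Granting tangency, restricting (\ref{sdR}) by $x^A=0$, $\y^I=0$ kills precisely the $\partial_{\y^K}$- and $\partial_{x^A}$-components and leaves
$$\xdp_{|E_0[1]}=\tfrac12 c^\zk_{\zi\zi'}(x^\za,0)\,\y^{\zi'}\y^\zi\,\partial_{\y^\zk}+\zr^\zb_\zi(x^\za,0)\,\y^\zi\,\partial_{x^\zb},$$
which is the de Rham field of the reduced bivector $\Pi_0$ displayed just before the theorem; hence $\xdp_{|E_0[1]}=\xd^{\Pi_0}$, giving (c).

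Finally, (c)$\,\Leftrightarrow\,$(d) is the standard dictionary between a vector field tangent to a submanifold together with its restriction, and the induced pullback intertwining the two derivations. Because $j_{E_0}^\ast$ is the surjective pullback along the inclusion $E_0[1]\hookrightarrow E[1]$, the identity (\ref{sam}) says exactly that $\xdp$ is $j_{E_0}$-related to $\xd^{\Pi_0}$, that is, tangent to $E_0[1]$ with restriction $\xd^{\Pi_0}$; surjectivity of $j_{E_0}^\ast$ makes that restriction unique, so (c) and (d) carry the same content. The individual computations are elementary; the step I expect to demand the most care is the coisotropy equivalence, where one must both pin down the conormal bundle of $E_0^\perp$ correctly and check that the ideal-theoretic characterization of coisotropy persists without the Jacobi identity.
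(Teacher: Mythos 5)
Your proposal is correct and follows essentially the same route as the paper: both arguments reduce all four conditions to the local structure-function identities $\zr^A_\zi(x^\za,0)=0$, $c^K_{\zi\zi'}(x^\za,0)=0$ and verify coisotropy/tangency on the defining functions ($x^A,\zx_\zi$ for $E_0^\perp$, resp.\ $x^A,\y^I$ for $E_0[1]$). The only difference is organizational — the paper runs the single cycle (a)$\Rightarrow$(b)$\Rightarrow$(c)$\Rightarrow$(d)$\Rightarrow$(a) and proves (a)$\Rightarrow$(b) via the bracket identities (\ref{drel})--(\ref{drel1}) applied to ideal generators, where you compute the Hamiltonian vector fields explicitly and use $(\star)$ as a hub — but the mathematical content is the same.
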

\begin{proof} (a) $\Rightarrow$ (b) The ideal $J$ of function vanishing on $E_0^\perp$ is generated by basic
functions vanishing on $M_0$ and sections of $E$ extending sections of $E_0$. The equations (\ref{drel}) and
(\ref{drel1}) immediately imply that $\{ J,J\}_\Pi=0$.

(b) $\Rightarrow$ (c) As the ideal $\mathbf{J}$ of functions on $E[1]$ vanishing on $E_0[1]$ is generated by basic
functions vanishing on $M_0$ and functions associated with extensions of sections of $E_0^\perp$, so locally $x^A$ and $\mathbf{y}^I$, we get
$\xdp(\mathbf{J})\subset \mathbf{J}$ in view of (\ref{r3}), (\ref{r4}), and the fact that $\zr_\zi^A(x^a,0)=0$ and
$c_{\zi\zi'}^K(x^a,0)=0$.

(c) $\Rightarrow$ (d) is obvious, and (d) implies that $\zr_\zi^A(x^a,0)=0$ and $c_{\zi\zi'}^K(x^a,0)=0$, so
(a).

\end{proof}
\noindent Note that (\ref{sam}) implies that $j_{E_0}^\ast$ induces a map on classes,
$$j_{E_0}^\ast:[\A^1(E^*)]\ra[\A^1(E_0^*)]$$
($j_{E_0}^\ast:H^1(E)\ra H^1(E_0)$ in the case of a Lie algebroid). Let $\mod(E)$ (resp, $\mod(E_0)$) be
the modular class of the skew algebroid $(E,[\cdot,\cdot]_\Pi)$ (resp., $(E_0,[\cdot,\cdot]_{\Pi_0})$). It is
easy to see that, in general,
$$ \mod(E_0)\ne j_{E_0}^\ast(\mod(E))\,.
$$
Actually, as easily checked in local coordinates as above and the corresponding sections of $L^E$,
\bea\nonumber j_{E_0}^\ast(\ph^\Pi_\zs)(x^\za)&=&
\sum_{\zi=1}^{n_0}\Bigl(\sum_{\zk=1}^{n_0} c^\zk_{\zi\zk}(x^\za,0)+\sum_{K=n_0+1}^nc^K_{\zi K}(x^\za,0)+
\sum_{\zb=1}^{m_0}\frac{\pa\zr^\zb_\zi}{\pa
x^\zb}(x^\za,0)\Bigr)e^\zi\\&=&\ph^{\Pi_0}_\zs(x^\za)+\sum_{\zi=1}^{n_0}\sum_{K=n_0+1}^nc^K_{\zi K}(x^\za,0)e^\zi\,.
\label{aa1}\eea The class
$$\mod(E_0;E)=\mod(E_0)-j_{E_0}^\ast(\mod(E))\,,$$
represented by
\be\label{rc}\ph^{\Pi_0}_\zs(x^\za)-j_{E_0}^\ast(\ph^\Pi_\zs)(x^\za)=-\sum_Kc^K_{\zi K}(x^\za,0)e^\zi\,,\ee
we will call the {\it relative modular class} of the
subalgebroid $E_0$ in $E$.

\medskip
It is easy to see that there is a canonical $E_0$-connection $\nabla^{\zn(E_0)}$ in the normal bundle
$\zn(E_0)=E_{|M_0}/E_0$ ({\it linear holonomy connection}):
\be\label{norc}\nabla^{\zn(E_0)}_X([e])=[[X,e]_\zP]\,,
\ee
(where $[e]$ is the class of a section $e$ of $E$ in $\zn(E_0)$), thus a canonical connection
$\nabla^{\zn(E_0)}$ in the line bundle $\we^{top}\zn(E_0)$.
In particular, for $\zi=1,\dots,n_0$ and $J=n_0+1,\dots, n$,
\be\label{connection}
\nabla^{\zn(E_0)}_{e_{\zi}}([e_J])=\sum_{K=n_0+1}^nc^K_{\zi J}(x^\za,0)[e_K]
\ee
and, for a section
$X=f^\zi(x^1,\dots,x^{m_0})e_\zi$ of $E_0$,
$$\nabla^{\zn(E_0)}_X\left([e_{n_0+1}]\we\cdots\we [e_n]\right)=
\sum_{\zi=1}^{n_0}\sum_{K=n_0+1}^nc^K_{\zi K}(x^\za,0)f^\zi(x^\za)[e_{n_0+1}]\we\cdots\we [e_n]\,.
$$
This, compared with (\ref{rc}), proves the following.
\begin{theorem} The relative class $\mod(E_0;E)$ is the obstruction to the existence of a transversal to $E_0$
measure invariant under the linear holonomy:
\be\label{norc1} \mod(E_0;E)=-\ch(\we^{top}{\zn(E_0)})=\ch(\we^{top}{\zn^\ast(E_0)})\,.
\ee
\end{theorem}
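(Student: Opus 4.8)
The plan is to establish the two equalities in (\ref{norc1}) separately by reading both sides off from explicit local representatives. For the first equality $\mod(E_0;E)=-\ch(\we^{top}\zn(E_0))$ I would compare the local formula for the holonomy connection displayed just before the theorem with the local expression (\ref{rc}) for the relative class; for the second equality $-\ch(\we^{top}\zn(E_0))=\ch(\we^{top}\zn^\ast(E_0))$ I would invoke the duality of line-bundle connections encoded in (\ref{dqd}).

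For the first equality, observe that $\nabla^{\zn(E_0)}$ is an $E_0$-connection on the line bundle $\we^{top}\zn(E_0)$ over $M_0$ and that the local frame $\zs=[e_{n_0+1}]\we\cdots\we[e_n]$ trivializes it. Applying the defining relation (\ref{cf}) for the characteristic form to the displayed action of $\nabla^{\zn(E_0)}$ on $\zs$, and using $\la e_\zi,e^{\zi'}\ran=\zd_\zi^{\zi'}$, I would read off
\[
\ph^{\nabla^{\zn(E_0)}}_\zs=\sum_{\zi=1}^{n_0}\Bigl(\sum_{K=n_0+1}^{n} c^K_{\zi K}(x^\za,0)\Bigr)e^\zi\,.
\]
This is precisely the negative of the representative $-\sum_K c^K_{\zi K}(x^\za,0)e^\zi$ of $\mod(E_0;E)$ furnished by (\ref{rc}). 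Since both $\mod(E_0;E)$ and $\ch(\we^{top}\zn(E_0))$ are classes in $[\A^1(E_0^\ast)]$, independent of the trivializing frame by the frame-independence established for the characteristic form, the matching of representatives yields $\mod(E_0;E)=-\ch(\we^{top}\zn(E_0))$.

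For the second equality I would use that $\we^{top}\zn^\ast(E_0)=(\we^{top}\zn(E_0))^\ast$ together with the fact that, for any $E_0$-connection $\nabla$ on a line bundle $L$ with dual connection $\nabla^\ast$ on $L^\ast$, the two characteristic forms are opposite. Concretely, pairing a trivializing section $\zs$ of $L$ with its dual $\zs^\ast$ (so that $\bk{\zs^\ast}{\zs}=1$) and applying (\ref{dqd}) gives $\bk{\nabla^\ast_X\zs^\ast}{\zs}=-\la X,\ph^\nabla_\zs\ran$, whence $\ph^{\nabla^\ast}_{\zs^\ast}=-\ph^\nabla_\zs$ and therefore $\ch(L^\ast)=-\ch(L)$. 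Taking $L=\we^{top}\zn(E_0)$ yields $\ch(\we^{top}\zn^\ast(E_0))=-\ch(\we^{top}\zn(E_0))$, completing the chain of equalities.

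The computations are short, so the remaining work is bookkeeping rather than a genuine obstacle: one must keep the index ranges and the single overall sign straight when matching (\ref{connection}) against (\ref{rc}), verify that the top-exterior-power and dualization operations commute with the induced connections so that the dual characteristic class really is the negative one, and — when $\we^{top}\zn(E_0)$ fails to be trivializable — pass to $|L|$ or to $L\ot L$ exactly as in Section 2 so that $\ch$ remains globally meaningful. The geometric substance is already packaged in the preceding identification of $\nabla^{\zn(E_0)}$ as the linear holonomy connection, so the theorem's interpretation of $\mod(E_0;E)$ as the obstruction to an invariant transversal measure then follows at once from the characterization of modularity in terms of invariant densities.
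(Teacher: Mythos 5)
Your proposal matches the paper's own argument: the paper likewise proves the first equality by comparing the displayed local action of $\nabla^{\zn(E_0)}$ on the frame $[e_{n_0+1}]\we\cdots\we[e_n]$ (which yields the characteristic form $\sum_{\zi}\sum_{K}c^K_{\zi K}(x^\za,0)e^\zi$) with the representative $-\sum_K c^K_{\zi K}(x^\za,0)e^\zi$ of $\mod(E_0;E)$ from (\ref{rc}), and the second equality is the standard sign flip $\ch(L^\ast)=-\ch(L)$ for the dual connection, which you justify correctly via (\ref{dqd}). The proof is correct and essentially identical in approach; your remarks on frame-independence and on passing to $|L|$ in the non-trivializable case are exactly the bookkeeping the paper leaves implicit.
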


\section{Holonomy}

Let now $\zg:I=[0,1]\to E$ be a smooth {\it admissible path} in the skew algebroid $E$, i.e.,
\be\label{admissible}
\forall \ t\in I\quad \zr(\zg(t))=\dot{\ulg}(t)\,,
\ee
where $\ulg:I\ra M$ is the projection of $\zg$ to $M$, $\ulg=\zt\circ\zg$, and $\dot{\ulg}:I\ra T M$ is the tangent prolongation of $\ulg$.

Fixing an $E$-connection $\nabla$ on a vector bundle $A$ over $M$, we can define the {\it parallel transport of the fibers of $A$ of along $\zg$} (cf. \cite{CF1,Fe}) as the path $[0,1]\ni t\mapsto F_t$, $F_t:A_{\ulg(0)}\ra A_{\ulg(t)}$, of fiber morphisms associated with solutions of the differential equation
\be\label{pt}
\nabla_{\zg(t)}Y(t)=0\,,\quad Y(t)\in A_{\ulg(t)} \,,
\ee
in an obvious way.
In local coordinates associated with a choice of a local basis $f_1,\dots,f_s$ of sections of $A$, (\ref{pt}) reads as
\be\label{pt0}
\dot{Y}^j(t)+\zG^j_{ik}(\ulg(t))\zg^i(t)Y^k(t)=0\,,
\ee
where
$$\nabla_{e_i}(u^k(x)f_k)=\left(\zr^a_i(x)\frac{\pa u^j}{\pa x^a}(x)+\zG_{ik}^j(x)u^k(x)\right)f_j\,.
$$
Note that
$$\zg^i(t)\zr^a_i(\ulg(t))\frac{\pa Y^j}{\pa x^a}(\ulg(t))=\dot{Y}^j(t)$$
due to admissibility of $\zg$, independently on the extension of $Y$ to a time-dependent section of $A$.
Like in the case of a Lie algebroid, one can easily prove that the parallel transport along $\zg$ can be described as the flow $\exp_t(D)$ over $\ulg$,
\be\label{pt1}
F_t(v)=\exp_t(D)(v)\,,\quad v\in A_{\ulg(0)}\,,
\ee
where $D_t=\nabla_{\zg(t)}$. Of course, since formally $\nabla_{\zg(t)}$ is not a globally defined quasi-derivative, one takes $\nabla_{X_t}$ instead, where $X=\{ t\mapsto X_t\}$ is any time-dependent section of $E$ such that $X_t(\ulg(t))=\zg(t)$; the r.h.s of (\ref{pt1}) does not depend on the choice of $X$.

Let
$\zw=\xd Y^1\we\cdots\we\xd Y^s$ be a homogeneous volume form on fibers associated with the chosen local basis of sections. In view of (\ref{pt0}), the determinant of the parallel transport, $h(t)=\det(F_t)$, expressed in these coordinates satisfies the differential equation
\be\label{deq}
\dot{h}(t)+\sum_{i,j}\zG^j_{ij}(\ulg(t))\zg^i(t)h(t)=0\,,
\ee
thus
\be\label{deq1}
h(1)=\exp\left(-\int_0^1\left(\sum_{i,j}\zG^j_{ij}(\ulg(t))\zg^i(t)\right)\xd t\right)\,.
\ee
All this, applied to the liner holonomy connection $\nabla^{\zn(E_0)}$ in the normal bundle
$A=\zn(E_0)=E_{|M_0}/E_0$ associated with a subalgebroid $E_0$ of $E$, gives (see (\ref{connection}))
\bea\label{holonomy} h(1)&=&\exp\left(\int_0^1\left(\sum_{\zi=1}^{n_0}\sum_{K=n_0+1}^nc^K_{\zi K}(\ulg(t))\zg^\zi(t)\right)\xd t\right)\\ &=&\exp\left(\int_0^1\left\langle \left(\ph^{\Pi_0}_\zs-j_{E_0}^\ast(\ph^\Pi_\zs)\right)(\ulg(t)),\zg(t)\right\rangle\xd t\right)=
\exp\left(\int_\zg\left(\ph^{\Pi_0}_\zs-j_{E_0}^\ast(\ph^\Pi_\zs)\right)\right)\,.\nonumber
\eea
Here, for $\za\in\Sec(E^\ast)$, we denote
$$\int_\zg\za=\int_0^1\left\langle \za(\ulg(t)),\zg(t)\right\rangle\xd t\,.$$
It is easy to see that, for $\za=\xd^\zP\! f$ and $\zg$ admissible, we have
$$\int_\zg\xd^\zP\! f=f(\ulg(1))-f(\ulg(0))\,.$$
In particular, if $\ulg$ is a loop, $\ulg(0)=\ulg(1)$, then $\int_\zg\za$ does no depend on the class of $\za$ in $\A^1(E^*)/\xdp\A^0(E^*)$. In this case, $\hol(\zg,E_0):=h(1)$ is well defined independently on the choice of local coordinates and called the {\it linear holonomy of $\zg$} relative to $E_0$. Also the r.h.s. of (\ref{holonomy}) is well defined independently on the choice of coordinates and can be rewritten in the form $\exp\left(\int_\zg\mod(E_0;E)\right)$. In this way we get the following generalization of the results of \cite{CF,GGo} on holonomy of Poisson manifolds and their Poisson submanifolds.
\begin{theorem} For any subalgebroid $E_0$ of a skew algebroid $E$ over $M$ and for any smooth admissible path $\zg$ in $E$ covering a loop in $M$, one has
\be\label{holonomy1}
\hol(\zg,E_0)=\exp\left(\int_\zg\mod(E_0;E)\right)\,.
\ee
\end{theorem}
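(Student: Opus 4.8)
The plan is to observe that nearly all of the analytic work has already been assembled in the passage leading up to (\ref{holonomy}), so that the proof reduces to recognizing the integrand as the relative modular class and to checking that both sides of (\ref{holonomy1}) are genuinely intrinsic. First I would specialize the general determinant-of-parallel-transport formula (\ref{deq1}) to the linear holonomy connection $\nabla^{\zn(E_0)}$ on the normal bundle $\zn(E_0)=E_{|M_0}/E_0$, reading off the connection coefficients from (\ref{connection}): the relevant fiber trace is $\sum_{K}\zG^{K}_{\zi K}=\sum_{K}c^{K}_{\zi K}(x^\za,0)$. Substituting this into (\ref{deq1}) and using admissibility to write the result as a line integral along $\zg$ gives exactly the chain of equalities (\ref{holonomy}), i.e. $h(1)=\hol(\zg,E_0)=\exp\!\left(\int_\zg(\ph^{\Pi_0}_\zs-j_{E_0}^\ast\ph^\Pi_\zs)\right)$.

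Next I would identify the integrand. By (\ref{rc}) the section $\ph^{\Pi_0}_\zs-j_{E_0}^\ast(\ph^\Pi_\zs)=-\sum_{K}c^{K}_{\zi K}(x^\za,0)e^\zi$ is a concrete representative of the relative modular class $\mod(E_0;E)=\mod(E_0)-j_{E_0}^\ast(\mod(E))$. Hence the exponent obtained in the first step is $\int_\zg$ of one particular representative of $\mod(E_0;E)$, and all that remains is to upgrade this from a representative to the class itself.

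The single genuine point, and the step I expect to be the main obstacle, is well-definedness: both the representative $\ph^{\Pi_0}_\zs-j_{E_0}^\ast\ph^\Pi_\zs$ and the fiber trivialization used to compute $h(1)=\det(F_1)$ depend on the chosen local basis, so I must verify that neither side of (\ref{holonomy1}) actually depends on these choices. For the left-hand side I would use that $\ulg$ is a loop: the endpoint fibers then coincide, so $F_1$ is an endomorphism of the single fiber $\zn(E_0)_{\ulg(0)}$ and its determinant is intrinsic, making $\hol(\zg,E_0)$ well defined. For the right-hand side I would show that, on loops, $\int_\zg$ descends from $\A^1(E^\ast)$ to the quotient $[\A^1(E^\ast)]=\A^1(E^\ast)/\xdp\A^0(E^\ast)$: combining (\ref{r3}) with admissibility (\ref{admissible}) gives $\la\xdp f(\ulg(t)),\zg(t)\ran=\zr^a_i(\ulg(t))\tfrac{\pa f}{\pa x^a}(\ulg(t))\zg^i(t)=\tfrac{\dd}{\dd t}f(\ulg(t))$, so that $\int_\zg\xdp f=f(\ulg(1))-f(\ulg(0))=0$ when $\ulg$ is a loop. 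Consequently $\int_\zg$ of any representative of $\mod(E_0;E)$ yields the same number, the right-hand side equals $\exp\!\left(\int_\zg\mod(E_0;E)\right)$ unambiguously, and (\ref{holonomy1}) follows by combining this with the expression for $h(1)$ obtained in the first step.
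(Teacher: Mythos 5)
Your proposal is correct and follows essentially the same route as the paper: specializing the determinant formula (\ref{deq1}) to the linear holonomy connection via (\ref{connection}), recognizing the integrand as the representative (\ref{rc}) of $\mod(E_0;E)$, and using $\int_\zg\xdp f=f(\ulg(1))-f(\ulg(0))=0$ on loops to pass from the representative to the class. The only addition is your explicit remark that $F_1$ is an endomorphism of a single fiber when $\ulg$ is a loop, which the paper leaves implicit; this is a correct and welcome clarification, not a deviation.
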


\section{Modular classes of algebroid relations}
Given now two skew algebroids, $(E_i,\Pi_i)$ over the base $M_i$, $i=1,2$, we can consider the {\it direct
product skew algebroid} $E_1\times E_2$ over $M_1\ti M_2$ corresponding to the bivector field `$\Pi_1+\Pi_2$'
on $E^*_1\ti E^*_2$. Interpreting $\A(E^*_1\ti E^*_2)$ as $\A(E^*_1)\otimes\A(E^*_2)$, we can write
$$\xd^{\Pi_1\ti\Pi_2}=\xd^{\Pi_1}\ot\, 1+1\ot\xd^{\Pi_2}\,,$$ or simply, with some abuse of notation, $\xd^{\Pi_1\ti\Pi_2}=\xd^{\Pi_1}+\xd^{\Pi_2}$.

The canonical projections $p_i:E_1\ti E_2\ra E_i$, $i=1,2$, induce the maps
$p_i^\ast:\A(E_i^\ast)\ra\A(E_1^*\ti E_2^*)$ which are algebroid morphisms, i.e. they intertwine the
corresponding de Rham derivatives,
\be\label{dp} p_i^\ast\circ\xd^{\Pi_i}=(\xd^{\Pi_1}+\xd^{\Pi_2})\circ p_i^\ast\,.
\ee
One can easily prove the following.
\begin{theorem}
\be\label{t2}\mod(E_1\ti E_2)=p_1^*(\mod(E_1))+p_2^*(\mod(E_2))\,.\ee
\end{theorem}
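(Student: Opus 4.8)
The plan is to establish the identity first at the level of representing $1$-forms, using the explicit formula (\ref{aa}), and then to descend to the classes $[\A^1(\cdot)]$. First I would work in a neighbourhood of a fibre of $E_1\ti E_2$ with product affine coordinates obtained by juxtaposing coordinates $(x^a,y^i)$ adapted to $E_1$ and $(x^{a'},y^{i'})$ adapted to $E_2$, together with the product local bases of sections, with dual bases $e^i$ (from $E_1$) and $e^{i'}$ (from $E_2$). The structural point is that the direct product introduces no coupling between the two factors: denoting the combined structural constants and anchor components of $\Pi_1\ti\Pi_2$ by $C^K_{IJ}$ and $\zr^A_I$, these are block-diagonal, the only nonzero entries being $C^k_{ij}$, $C^{k'}_{i'j'}$, $\zr^a_i$, $\zr^{a'}_{i'}$ inherited from the factors, while every mixed component (one index from each factor) vanishes. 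This is nothing but the coordinate content of the relation $\xd^{\Pi_1\ti\Pi_2}=\xd^{\Pi_1}+\xd^{\Pi_2}$ recorded above.

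Second I would substitute these block-diagonal structure functions into (\ref{aa}). In the coefficient of $e^i$ the summation $\sum_K C^K_{iK}$ receives contributions only from indices $K=k$ belonging to the first factor, since $C^{k'}_{ik'}=0$, and likewise $\sum_A\pa\zr^A_i/\pa x^A$ reduces to $\sum_a\pa\zr^a_i/\pa x^a$; hence the $e^i$-coefficient of the modular form of the product equals exactly the $e^i$-coefficient of $\ph^{\Pi_1}_\zs$, and symmetrically the $e^{i'}$-coefficient equals that of $\ph^{\Pi_2}_\zs$. Taking the product section $\zs=\zs_1\boxtimes\zs_2$ of $L^{E_1\ti E_2}$ this reads
\[
\ph^{\Pi_1\ti\Pi_2}_\zs=p_1^*\bigl(\ph^{\Pi_1}_\zs\bigr)+p_2^*\bigl(\ph^{\Pi_2}_\zs\bigr)\,,
\]
and passing to classes gives (\ref{t2}).

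The step that needs genuine care, rather than routine computation, is the passage to classes: one must know that $p_i^*$ is well defined on the quotients $[\A^1(E_i^*)]=\A^1(E_i^*)/\xd^{\Pi_i}\A^0(E_i^*)$. This is exactly guaranteed by the intertwining property (\ref{dp}): from $p_i^*\circ\xd^{\Pi_i}=(\xd^{\Pi_1}+\xd^{\Pi_2})\circ p_i^*$ it follows that $p_i^*$ carries coboundaries of the factor to coboundaries of the product, so it descends to a map of classes and the displayed equality of representatives becomes an equality of modular classes. A secondary, purely bookkeeping point needed to globalize the local computation is the canonical factorization $L^{E_1\ti E_2}\simeq L^{E_1}\boxtimes L^{E_2}$, which comes from $\we^{\mathrm{top}}(E_1\ti E_2)=\we^{\mathrm{top}}E_1\boxtimes\we^{\mathrm{top}}E_2$ and the analogous splitting of $\we^{\mathrm{top}}T^*(M_1\ti M_2)$; it guarantees that $\zs=\zs_1\boxtimes\zs_2$ is a genuine trivializing section and that the factor modular forms pull back consistently.

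Alternatively, and more transparently, I would argue supergeometrically on $(E_1\ti E_2)[1]=E_1[1]\ti E_2[1]$. Choosing a product Berezinian $1$-density $|s|=|s_1|\boxtimes|s_2|$ and using that the superdivergence (\ref{div}) is additive in the vector field together with $\xd^{\Pi_1\ti\Pi_2}=\xd^{\Pi_1}+\xd^{\Pi_2}$, where each summand involves only the coordinates of its own factor, one obtains
\[
\div_{|s|}\bigl(\xd^{\Pi_1\ti\Pi_2}\bigr)=p_1^*\,\div_{|s_1|}\bigl(\xd^{\Pi_1}\bigr)+p_2^*\,\div_{|s_2|}\bigl(\xd^{\Pi_2}\bigr)\,;
\]
the supergeometric description of the modular class obtained in Section 4 identifies the left-hand side with a representative of $\mod(E_1\ti E_2)$ and the two terms on the right with representatives of $p_1^*(\mod(E_1))$ and $p_2^*(\mod(E_2))$, yielding (\ref{t2}) at once.
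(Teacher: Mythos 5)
Your proof is correct: the paper states this theorem without proof (``One can easily prove the following''), and your block-diagonal coordinate computation with formula (\ref{aa}), together with the observation that the intertwining relation (\ref{dp}) lets $p_i^*$ descend to the quotients $[\A^1(E_i^*)]$, is exactly the routine verification being omitted. The alternative supergeometric argument via additivity of the Berezinian divergence and $\xd^{\Pi_1\ti\Pi_2}=\xd^{\Pi_1}+\xd^{\Pi_2}$ is equally valid and consistent with the description of the modular class given in Section 4.
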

\noindent We will write in short, with some abuse of notation,
$$\mod(E_1\ti E_2)=\mod(E_1)+\mod(E_2)\,.$$
The following is generally known in the case of Lie algebroids (cf. \cite[Theorem 10.4.9]{Ma} and \cite{BKS,We0,Va}). \begin{theorem} Let $(E_i,\Pi_i)$ be a skew algebroid over a base $M_i$, $i=1,2$,
and let $\zF:E_1\ra E_2$ be a vector bundle morphism covering a smooth map $\zf:M_1\ra M_2$. The following are
equivalent.
\begin{itemize}
\item[(i)] The graph $\graph(\zF)$ of $\zF$ is a subalgebroid in the direct product skew algebroid $E_1\ti E_2$.
\item[(ii)] The annihilator $\graph(\zF)^\perp$ is a coisotropic submanifold in the Poisson manifold $(E_1^\ast\ti E_2^\ast,\Pi_1\ti\Pi_2)$.
\item[(iii)] The super vector field $\xd^{\Pi_1}+\xd^{\Pi_2}$ is tangent to the submanifold $\graph(\zF)[1]$
of the graded manifold $(E_1\ti E_2)[1]$.
\item[(iv)] The induced map $\zF^\ast:\A(E_2)\ra\A(E_1)$ intertwines the de Rham derivatives,
$$\zF^\ast\circ\xd^{\Pi_2}=\xd^{\Pi_1}\circ\,\zF^\ast\,.$$
\end{itemize}
\end{theorem}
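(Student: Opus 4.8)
The key observation is that the four conditions (i)--(iv) of the present theorem are precisely the four conditions (a)--(d) of Theorem \ref{th1} applied to the specific subbundle $E_0=\graph(\zF)\subset E_1\ti E_2$ over the submanifold $M_0=\graph(\zf)\subset M_1\ti M_2$. Thus my first step is to verify that $\graph(\zF)$ is indeed a vector subbundle of the direct product $E_1\ti E_2$ supported on $\graph(\zf)$: since $\zF$ is a vector bundle morphism covering $\zf$, its graph $\{(v,\zF(v)):v\in E_1\}$ is linear on fibers and sits over $\graph(\zf)=\{(p,\zf(p)):p\in M_1\}$, so this is immediate.

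\emph{Next I would match the conditions one by one.} Condition (i) is literally condition (a) of Theorem \ref{th1} for $E_0=\graph(\zF)$. Condition (ii) is condition (b), once one notes that the direct product carries the Poisson (more precisely skew-algebroid) bivector $\Pi_1\ti\Pi_2$ and that the annihilator $\graph(\zF)^\perp$ lives in $(E_1\ti E_2)^\ast\simeq E_1^\ast\ti E_2^\ast$. Condition (iii) is condition (c), using that the de Rham vector field of the direct product is $\xd^{\Pi_1}+\xd^{\Pi_2}$ by the formula $\xd^{\Pi_1\ti\Pi_2}=\xd^{\Pi_1}\ot 1+1\ot\xd^{\Pi_2}$ established just above, and that $\graph(\zF)[1]$ is the graded submanifold corresponding to the subbundle. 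Condition (iv) requires a little more care, because Theorem \ref{th1}(d) is phrased via the inclusion $j_{E_0}^\ast$ of the subalgebroid, whereas here the statement is phrased via the pullback $\zF^\ast$ of the morphism. The step to carry out is to identify these two maps: the induced algebra map $\zF^\ast:\A(E_2^\ast)\ra\A(E_1^\ast)$ factors as $\zF^\ast=j_{\graph(\zF)}^\ast\circ p_2^\ast$ composed with the identification of $\A(\graph(\zF)^\ast)$ with $\A(E_1^\ast)$ coming from the projection $p_1|_{\graph(\zF)}$, which is a vector bundle isomorphism onto $E_1$. Using (\ref{dp}), which says $p_i^\ast$ intertwines the de Rham derivatives, together with (\ref{sam}) for the inclusion, the intertwining property $\zF^\ast\circ\xd^{\Pi_2}=\xd^{\Pi_1}\circ\zF^\ast$ follows from the corresponding property (\ref{sam}) for $j_{\graph(\zF)}^\ast$, and conversely.

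\textbf{The main obstacle I anticipate is the correct identification in the (iv) step.} One must check carefully that the restriction-to-$\graph(\zF)$ of the de Rham derivative of the product, transported along the isomorphism $p_1|_{\graph(\zF)}:\graph(\zF)\xrightarrow{\sim}E_1$, is exactly $\xd^{\Pi_1}$, so that the induced derivation $\xd^{\Pi_0}$ on $\A(\graph(\zF)^\ast)$ corresponds to $\xd^{\Pi_1}$ on $\A(E_1^\ast)$ and not to something twisted. This amounts to confirming that the bivector $\Pi_0$ obtained by Poisson reduction of $\Pi_1\ti\Pi_2$ along $\graph(\zF)^\perp$ pushes forward to $\Pi_1$ under this identification, which is a compatibility statement that can be verified in the adapted affine coordinates of Section 5. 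Once this identification is secured, the equivalence of (i)--(iv) is simply a transcription of Theorem \ref{th1}, and I would present the proof as precisely that transcription rather than reproving each implication from scratch.
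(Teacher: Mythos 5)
Your proposal is correct and follows essentially the same route as the paper: the equivalence of (i)--(iii) is read off from Theorem \ref{th1} applied to the subbundle $\graph(\zF)$ over $\graph(\zf)$, and (iv) is obtained by transporting the restriction map $j_{\graph(\zF)}^\ast$ through the isomorphism $\graph(\zF)\simeq E_1$ (the paper uses the embedding $j=Id_{E_1}\ti\zF$ and the identity $j^\ast(\mu_1\ot 1+1\ot\mu_2)=\mu_1+\zF^\ast(\mu_2)$, which is your factorization $\zF^\ast=j^\ast\circ p_2^\ast$ in disguise). The compatibility you flag as the main obstacle --- that the reduced bivector $\Pi_0$ corresponds to $\Pi_1$ under this identification --- is exactly the point the paper asserts without computation, and your plan to check it in the adapted coordinates of Section 5 settles it.
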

\begin{proof} It is clear that $\graph(\zF)$ is a vector subbundle in $E_1\ti E_2$ supported on the graph $\graph(\zf)$,
and we have (i) $\Leftrightarrow$ (ii) $\Leftrightarrow$ (iii) according to Theorem \ref{th1}. Assume now (i). The embedding
$j=Id_{E_1}\ti\zF:E_1\ra E_1\ti E_2$ is an isomorphism of the vector bundle $E_1$ onto $\graph(\zF)$ that
identifies the skew algebroid structure on $E_1$ with that on $\graph(\zF)$. We have then
\be\label{dd0}j^\ast\circ(\xd^{\Pi_1}+\xd^{\Pi_2})=\xd^{\Pi_1}\circ j^\ast\,.\ee
On the other hand, for any $\mu_i\in\A(E^\ast_i)$,
$$j^\ast(\mu_1\ot 1+1\ot\mu_2)=\mu_1+\zF^\ast(\mu_2)\,,$$
so that
\be\label{dd1}\xd^{\Pi_1}\mu_1+\zF^\ast(\xd^{\Pi_2}\mu_2)=\xd^{\Pi_1}\mu_1+\xd_{\Pi_1}(\zF^\ast(\mu_2))\ee
and (iv) follows. Conversely, (\ref{dd1}) implies (\ref{dd0}), thus (i).

\end{proof}
\begin{definition} A vector bundle morphism $\zF:E_1\ra E_2$ between skew algebroids $(E_i,\Pi_i)$, $i=1,2$,
which satisfies one (thus all) of the above conditions (i)--(iv) we call a {\it morphism of skew algebroids}.
Any subalgebroid in $E_1\ti E_2$ we call a {\it skew algebroid relation} between $E_1$ and $E_2$.
\end{definition}
It follows that skew algebroid morphisms are particular skew algebroid relations and that the latter can be
equivalently determined by coisotropic subbundles in $(E_1^\ast\ti E_2^\ast,\Pi_1\ti\Pi_2)$. For any skew
algebroid relation $\cR\subset E_1\ti E_2$ supported on a submanifold $M_0\subset M_1\ti M_2$ and equipped
with the induced skew algebroid structure associated with a tensor $\Pi$ on $\cR^*$, its relative modular
class
$$\mod(\cR;E_1\ti E_2)=\mod(\cR)-j_\cR^\ast\left(p_1^\ast(\mod(E_1))+p_2^\ast(\mod(E_2))\right)\,,$$
where $j_\cR:\cR\ra E_1\ti E_2$ is the canonical embedding, can be simply written as
$$\mod(\cR;E_1\ti E_2)=\mod(\cR)-j_{\cR,_1}^\ast(\mod(E_1))-j_{\cR,2}^\ast(\mod(E_2))\,,$$
where $j_{\cR,i}=p_i\circ j_\cR$, $i=1,2$.

\medskip
We can, however, define another relative modular class, $\mod(\cR)$, associated with the algebroid relation $\cR$
by comparing the classes of $E_1$ and $E_2$ with respect to this relation, namely
\be\label{mcar}\mod(\cR)=j_{\cR,1}^\ast(\mod(E_1))-j_{\cR,2}^\ast(\mod(E_2))\,.\ee
We will call it the {\it modular class of the skew algebroid relation} $\cR$. By obvious reasons we will call
$\za_i\in\A^1(E_i^\ast)$, $i=1,2$, {\it $\cR$-related} if
$$j_{\cR,1}^\ast(\za_1)=j_{\cR,2}^\ast(\za_2)\,.$$
One important remark is that the r.h.s., thus $\mod(\cR)$, has to be understood as an element in
$$\A^1(\cR)/\left(j_{\cR,1}^\ast(\xd^{\Pi_1}(C^\infty(M_1)))+j_{\cR,2}^\ast(\xd^{\Pi_2}(C^\infty(M_2)))\right)$$
and not as an element in $[\A^1(\cR)]=\A^1(\cR)/\xd^{\Pi}(C^\infty(M_0))$. This is because, in the latter
case, there is {\it a priori} no guarantee that we can find representatives $\za_i$ of $\mod(E_i)$, $i=1,2$,
which are $\cR$ related even when $\mod{\cR}$ is trivial in $[\A^1(\cR)]=\A^1(\cR)/\xd^{\Pi}(C^\infty(M_0))$.
Moreover, the r.h.s. of (\ref{mcar}) makes sense even when $\cR$ is not a subalgebroid and just a vector
subbundle of $E_1\ti E_2$, i.e. a {\it linear relation} between skew algebroids.

With this interpretation, the obvious characterization of modularity of the relation is the following.
\begin{proposition}
The modular class $\mod(\cR)\subset E_1\ti E_2$ vanishes if and only if there are nowhere-vanishing densities
$|\zs_i|\in\Sec(|L^{E_i}|)$, $i=1,2$, such that $\ph^{\Pi_1}_{|\zs_1|}$ and $\ph^{\Pi_2}_{|\zs_2|}$ are $\cR$-related.
\end{proposition}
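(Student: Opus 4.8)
The plan is to unwind the definitions and reduce the statement to a purely pointwise/linear-algebra fact about the restriction maps $j_{\cR,1}^\ast$ and $j_{\cR,2}^\ast$. By definition (\ref{mcar}), $\mod(\cR)=j_{\cR,1}^\ast(\mod(E_1))-j_{\cR,2}^\ast(\mod(E_2))$, viewed as an element of the quotient space
$$\A^1(\cR)/\left(j_{\cR,1}^\ast(\xd^{\Pi_1}(C^\infty(M_1)))+j_{\cR,2}^\ast(\xd^{\Pi_2}(C^\infty(M_2)))\right)\,.$$
First I would recall from Theorem~2.8 (the characteristic-class statement) that for each $i$ the modular class $\mod(E_i)$ is represented by the modular form $\ph^{\Pi_i}_{|\zs_i|}$ associated to \emph{any} nowhere-vanishing density $|\zs_i|\in\Sec(|L^{E_i}|)$, and that a different choice $|\zs_i'|=e^{f_i}|\zs_i|$ changes this representative by $\xd^{\Pi_i}f_i$ (this is exactly the content of Theorem~2.5 together with the construction of $\ch(L)$). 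Thus any representative of $\mod(E_i)$ in $\A^1(E_i^\ast)$ is of the form $\ph^{\Pi_i}_{|\zs_i|}$ for a suitable choice of $|\zs_i|$.

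The forward direction is then immediate: if $\mod(\cR)=0$ in the stated quotient, this says precisely that there exist representatives $\za_i$ of $\mod(E_i)$ with $j_{\cR,1}^\ast(\za_1)=j_{\cR,2}^\ast(\za_2)$ modulo the subspace $j_{\cR,1}^\ast(\xd^{\Pi_1}C^\infty(M_1))+j_{\cR,2}^\ast(\xd^{\Pi_2}C^\infty(M_2))$; absorbing the two exact correction terms $\xd^{\Pi_1}f_1$ and $\xd^{\Pi_2}f_2$ into a redefinition of the densities (replacing $|\zs_i|$ by $e^{f_i}|\zs_i|$, using the transformation rule above) produces densities $|\zs_i|$ whose modular forms $\ph^{\Pi_i}_{|\zs_i|}$ are honestly $\cR$-related in the sense $j_{\cR,1}^\ast(\ph^{\Pi_1}_{|\zs_1|})=j_{\cR,2}^\ast(\ph^{\Pi_2}_{|\zs_2|})$. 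The converse direction is even more direct: if such densities exist with $\ph^{\Pi_1}_{|\zs_1|}$ and $\ph^{\Pi_2}_{|\zs_2|}$ being $\cR$-related, then $j_{\cR,1}^\ast(\ph^{\Pi_1}_{|\zs_1|})-j_{\cR,2}^\ast(\ph^{\Pi_2}_{|\zs_2|})=0$ on the nose, and since these forms represent $\mod(E_1)$ and $\mod(E_2)$ respectively, the difference $\mod(\cR)$ vanishes (a fortiori modulo the exact subspace).

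The main subtlety — and the only place where one must be careful rather than merely formal — is the bookkeeping of the quotient in which $\mod(\cR)$ lives. The whole point of working modulo $j_{\cR,1}^\ast(\xd^{\Pi_1}C^\infty(M_1))+j_{\cR,2}^\ast(\xd^{\Pi_2}C^\infty(M_2))$, rather than the smaller/incomparable space $\xd^{\Pi}(C^\infty(M_0))$, is flagged in the remark preceding the proposition: it is exactly this quotient that lets one trade a vanishing class for a \emph{simultaneous} choice of $\cR$-related representatives coming separately from $E_1$ and $E_2$. So the key step I expect to require genuine attention is verifying that the two allowable gauge freedoms — independently rescaling $|\zs_1|$ and $|\zs_2|$ by $e^{f_1}$ and $e^{f_2}$ — match up exactly with the two summands of the subspace defining the quotient. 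Once this correspondence is pinned down, the equivalence is a one-line reading of what ``$\mod(\cR)=0$'' means, and no computation in local coordinates is needed.
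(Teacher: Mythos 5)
Your proposal is correct and is exactly the argument the paper has in mind: the paper states this proposition without proof, introducing it as ``the obvious characterization,'' and the intended justification is precisely your unwinding of the quotient in (\ref{mcar}) together with the gauge freedom $\ph^{\Pi_i}_{e^{f_i}|\zs_i|}=\ph^{\Pi_i}_{|\zs_i|}+\xd^{\Pi_i}f_i$, which matches the two summands $j_{\cR,i}^\ast(\xd^{\Pi_i}(C^\infty(M_i)))$ of the subspace being quotiented out. Nothing further is needed.
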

Let $\zf_{\cR,i}$ be the projection of $M_0$ on $M_i$, $i=1,2$. Completely analogously like for Lie algebroids
we can consider pull-back bundles $\zf_{\cR,1}^!\left(L^{E_1}\right)$ and
$\zf_{\cR,2}^!\left((L^{E_2})^\ast\right)$ equipped with the corresponding pull-back connections
$j_{\cR,1}^!(\nabla^{\Pi_1})$ and $j_{\cR,2}^!(\left(\nabla^{\Pi_2})^*\right)$ (cf. \cite[Proposition
1.2]{KLW}). Also the following is immediate (cf. \cite[Proposition 2.1]{KLW}).
\begin{proposition} If $\cR\subset E_1\ti E_2$ is a skew algebroid relation, then
\be\label{mcsar} \mod(\cR)=\ch\left((j_{\cR,1})^!\left(L^{E_1}\right)\ot (j_{\cR,2})^!\left(L^{E_2}\right)^\ast\right)\,.
\ee
Moreover, if $\cR=\graph(\zF)$ is the graph of a skew algebroid morphism $\zF:E_1\ra E_2$, then
$$\mod(\cR)=\mod(E_1)-\zF^\ast(\mod(E_2))\,.$$
\end{proposition}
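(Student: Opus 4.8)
I want to prove the final Proposition, which has two parts: first the characteristic-class formula \eqref{mcsar} for $\mod(\cR)$, and second the morphism special case. The natural strategy is to reduce everything to the already-established dictionary between modular forms and characteristic classes of the line bundles $L^{E_i}$, and then transport along the projections $j_{\cR,i}=p_i\circ j_\cR$. The crucial earlier inputs are: (i) the definition $\mod(E_i)=\ch(L^{E_i})$ via the connection $\nabla^{\Pi_i}$ of \eqref{LDe}; (ii) the additivity $\ch(L_1\ot L_2)=\ch(L_1)+\ch(L_2)$ and the rule $\ch(L^\ast)=-\ch(L)$ for dual line bundles, both noted in Section 2; and (iii) the definition \eqref{mcar} of $\mod(\cR)$ as $j_{\cR,1}^\ast(\mod(E_1))-j_{\cR,2}^\ast(\mod(E_2))$, living in the appropriate quotient of $\A^1(\cR)$.

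\textbf{Step 1: pull back the connections.} For each $i$ I would form the pull-back line bundle $(j_{\cR,i})^!(L^{E_i})$ over $M_0$ together with the pull-back $\cR$-connection $j_{\cR,i}^!(\nabla^{\Pi_i})$, exactly as flagged before the Proposition (cf.\ the cited \cite[Prop.\ 1.2]{KLW}). The key compatibility is that the characteristic form of a pulled-back connection, computed on a pulled-back nowhere-vanishing section, is the $j_{\cR,i}^\ast$-image of the original modular form: schematically $\ph^{j_{\cR,i}^!\nabla^{\Pi_i}}_{j_{\cR,i}^\ast\zs_i}=j_{\cR,i}^\ast(\ph^{\Pi_i}_{\zs_i})$. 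This is where the one-one correspondence between $\cR$-connections on line bundles and characteristic classes (Theorem on characteristic forms) does the real work, and it is the step I expect to require the most care, since I must check that the map on classes $j_{\cR,i}^\ast:[\A^1(E_i^\ast)]\ra\A^1(\cR)/(\dots)$ is exactly the one induced by the pull-back connection and lands in the correct quotient, the one in which $\mod(\cR)$ is defined.

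\textbf{Step 2: assemble the tensor product.} Granting Step 1, the characteristic class of the tensor product $(j_{\cR,1})^!(L^{E_1})\ot(j_{\cR,2})^!(L^{E_2})^\ast$ is, by additivity and the dual rule,
\be
\ch\left((j_{\cR,1})^!(L^{E_1})\ot(j_{\cR,2})^!(L^{E_2})^\ast\right)
=j_{\cR,1}^\ast(\ch(L^{E_1}))-j_{\cR,2}^\ast(\ch(L^{E_2}))\,,
\ee
and since $\ch(L^{E_i})=\mod(E_i)$ the right-hand side is precisely the definition \eqref{mcar} of $\mod(\cR)$. This proves \eqref{mcsar}. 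I would make sure the connection on the tensor product is the tensor-product connection $\nabla_1\ot Id+Id\ot\nabla^\ast_2$, so that the additivity lemma of Section 2 applies verbatim.

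\textbf{Step 3: the morphism case.} When $\cR=\graph(\zF)$, the embedding $j=Id_{E_1}\ti\zF$ identifies $E_1$ with $\graph(\zF)$, so $j_{\cR,1}$ becomes the identity $E_1\ra E_1$ and $j_{\cR,2}$ becomes $\zF$ itself under this identification. Hence $j_{\cR,1}^\ast(\mod(E_1))=\mod(E_1)$ and $j_{\cR,2}^\ast(\mod(E_2))=\zF^\ast(\mod(E_2))$, and \eqref{mcar} collapses to $\mod(\cR)=\mod(E_1)-\zF^\ast(\mod(E_2))$. The only thing to verify here is that the morphism property (iv) guarantees $\zF^\ast$ descends to classes, i.e.\ intertwines $\xd^{\Pi_2}$ and $\xd^{\Pi_1}$, which is exactly the content of the preceding Theorem; this makes the right-hand side well defined and completes the proof.
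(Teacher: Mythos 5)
Your proposal is correct and is exactly the argument the paper leaves implicit: the paper states this Proposition as ``immediate'' with a pointer to \cite[Proposition 2.1]{KLW}, and your three steps (pull-back connections satisfy $\ph^{j_{\cR,i}^!\nabla^{\Pi_i}}_{j_{\cR,i}^\ast\zs_i}=j_{\cR,i}^\ast(\ph^{\Pi_i}_{\zs_i})$, additivity of $\ch$ under tensor product together with $\ch(L^\ast)=-\ch(L)$, and the identification $j_{\cR,1}=\on{id}$, $j_{\cR,2}=\zF$ under $E_1\simeq\graph(\zF)$) are precisely the intended verification. The only nuance worth noting is that you correctly restrict to trivializing sections of product type, which is what keeps the computation in the finer quotient $\A^1(\cR)/\bigl(j_{\cR,1}^\ast(\xd^{\Pi_1}(C^\infty(M_1)))+j_{\cR,2}^\ast(\xd^{\Pi_2}(C^\infty(M_2)))\bigr)$ where $\mod(\cR)$ is defined.
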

For a skew algebroid morphism $\zF:E_1\ra E_2$ we will call $\mod(\zF)=\mod(E_1)-\zF^\ast(\mod(E_2))$ the {\it
modular class of the algebroid morphism} $\zF$. Note that, in the case when $\zF$ is a Lie algebroid morphism,
this coincides with the standard definition (see \cite{KSW}).

\begin{example}
Let now $(M_i,\zL_i)$, $i=1,2$, be Poisson manifolds and $\zf:M_1\ra M_2$ be a Poisson map. This means that
the graph of $\zf$ is a coisotropic submanifold in $M_1\ti\ol{M_2}$, where $\ol{M_2}$ is the manifold $M_2$
with the opposite Poisson structure $-\zL_2$ \cite{We0}. It is well known that any Poisson manifold $(M,\zL)$
gives rise to a Lie algebroid structure on $T^\ast M$ associated with the linear Poisson tensor $\dd_T\zL$  on
$TM$, the complete tangent lift of $\zL$ (cf. \cite{GU1,GU3}). The direct product of the tangent bundles
$TM_1\ti\ol{TM_2}$ carries therefore a linear Poisson structure $\zP_1\ti(-\zP_2)$, where $\zP_i=\dd_T\zL_i$,
$i=1,2$, which corresponds to a Lie algebroid structure on $T^\ast M_1\ti\ol{T^\ast M_2}$. Since
$T\graph(\zf)$ is the graph of $T\zf$, it is a coisotropic subbundle in $TM_1\ti\ol{TM_2}$, thus corresponds to a
Lie algebroid relation $\cR\subset T^\ast M_1\ti\ol{T^\ast M_2}$, where
$\cR=\left(T\graph(\zf)\right)^\perp$. It is easy to see that the modular class
$$\mod(\cR)=j_{\cR,1}^\ast(\mod(T^\ast M_1,\Pi_1))-j_{\cR,2}^\ast(\mod(T^\ast M_2,-\Pi_2))$$
coincides with $2\mod(\zf)$, where $\mod{\zf}$ is the modular class of the Poisson map $\zf$ as defined in
\cite{CF}. The factor 2 comes from the fact that the Lie algebroid modular class $\mod(T^\ast M)$ is
$2\mod(M)$, where $\mod(M)$ is the modular class of a Poisson manifold $(M,\zL)$ as defined in \cite{We}.
Actually, what is used in \cite{CF} implicitly is the relation $\cR'=(T\zf)^\ast\subset TM_1\ti{TM_2}$, the
dual relation of $T\zf$, embedded in $TM_1\ti{TM_2}$ by the map $(j_{\cR,1},-j_{\cR,2})$, and
the minus sign is canceled by the sign at $\zP_2$, so the result is the same.
\end{example}
\begin{remark} Of course, in the above example, instead of $\graph(\zf)$ we can start with an arbitrary
coisotropic submanifold in $M_1\ti\ol{M_2}$ which is a Poisson relation between $(M_1,\zL_1)$ and $(M_2,\zL_2)$.
\end{remark}

\section{Acknowledgements}
The author is indebted to Alan Weinstein, Rui Loja Fernandes, and Juan Carlos Marrero for their helpful suggestions
and comments.

\bibliographystyle{plain}

\begin{thebibliography}{10}
\bibitem{BKS} M.~Bojowald, A.~Kotov, and T.~Strobl.
\newblock Lie algebroid morphisms, Poisson sigma models, and off-shell closed gauge symmetries.
\newblock {\em J. Geom. Phys.} {\bf 54} (2005), 400-–426.

\bibitem{CF} R.~Caseiro and R.~L.~Fernandes.
\newblock The modular class of a Poisson map.
\newblock {\em arXiv:1103.4305}.

\bibitem{Cha} S.~A. Chaplygin.
\newblock On the theory of the motion of nonholonomic systems. Theorem on the reducing multiplier.
\newblock {\em Math. Sbornik} {\bf 28} (1911), 303--314.

\bibitem{CF1} M.~Crainic and R.~L.~Fernandes.
\newblock Integrability of Lie brackets.
\newblock {\em Ann. of Math.}  {\bf 157} (2003), 575–-620.

\bibitem{CF2} M.~Crainic and R.~L.~Fernandes.
\newblock Integrability of Poisson brackets.
\newblock {\em J. Diff. Geom.} {\bf 66} (2004), 71-–137.

\bibitem{ELW} S.~Evens, J.-H. Lu, and A.~Weinstein.
\newblock Transverse measures, the modular class, and a cohomology pairing for {L}ie algebroids.
\newblock {\em Quarterly J. Math., Oxford Ser. (2)} {\bf 50} (1999), 417--436.

\bibitem{FGM} Y.~Fedorov, L.~Garc\'\i a-Naranjo, and J.~C.~Marrero.
\newblock Hamiltonian dynamics on skew-symmetric
algebroids, unimodularity and preservation of volumes in nonholonomic mechanics.
\newblock preprint.

\bibitem{Fe} R.~L.~Fernandes.
\newblock Lie algebroids, holonomy and characteristic classes.
\newblock {\em Adv. Math.} {\bf 170} (2002) 119-–179.

\bibitem{GGo} V.~L.~Ginzburg and A.~Golubev.
\newblock Holonomy on Poisson manifolds and the modular class.
\newblock {\em Israel J. Math.} {\bf 122} (2001), 221-–242.

\bibitem{GG} K.~Grabowska and J.~Grabowski.
\newblock Variational calculus with constraints on general algebroids.
\newblock {\em J. Phys. A: Math. Theor.} {\bf 41} (2008), 175204 (25pp).

\bibitem{GG1} K.~Grabowska and J.~Grabowski.
\newblock Dirac Algebroids in Lagrangian and Hamiltonian Mechanics.
\newblock {\em J. Geom. Phys.} {\bf 61} (2011), 2233–2253.

\bibitem{GGU} K.~Grabowska, J.~Grabowski and P.~Urba\'nski.
\newblock Geometrical {M}echanics on algebroids.
\newblock  {\em Int. J. Geom. Meth. Mod. Phys.}  {\bf 3} (2006), 559--575.

\bibitem{GJ} J.~Grabowski and M.~J\'o\'zwikowski.
\newblock Pontryagin Maximum Principle on almost Lie algebroids.
\newblock {\em SIAM J. Control Optim.} {\bf 49} (2011), 1306-–1357.

\bibitem{GLMM} J.~Grabowski, M.~de Leon, J.C.~Marrero, and D.~Martin de Diego.
\newblock Nonholonomic Constraints: a New Viewpoint.
\newblock {\em J. Math. Phys.} {\bf 50} (2009), 013520 (17pp).

\bibitem{GMM} J.~Grabowski, G.~Marmo and P.~W.~Michor.
\newblock{Homology and modular classes of Lie algebroids}.
\newblock {\em Ann. Inst. Fourier.} {\bf 56} (2006), 69--83.

\bibitem{GU1} J.~Grabowski and P.~Urba\'nski.
\newblock Tangent lifts of Poisson and related structures.
\newblock {\em J. Phys. A: Math. Gen.} {\bf 28} (1995), 6743--6777.

\bibitem{GU3} J.~Grabowski and P.~Urba\'nski.
\newblock Lie algebroids and Poisson-Nijenhuis structures.
\newblock {\em Rep. Math. Phys.} {\bf 40}, (1997), 195--208.

\bibitem{GU} J.~Grabowski and P.~Urba\'nski.
\newblock Algebroids -- general differential calculi on vector bundles.
\newblock {\em J. Geom. Phys.} {\bf 31} (1999), 111--1141.

\bibitem{HM1} D.~Hern\'andez Ruip\'erez and J.~Mu\~noz Masqu\'e.
\newblock Construction intrins\`eque du faisceau de Berezin d'une vari\'et\'e gradu\'e.
\newblock {\em C. R. Acad. Sci. Paris Sér. I Math.} {\bf 301} (1985), 915-–918.

\bibitem{HM2} D.~Hern\'andez Ruip\'erez and J.~Mu\~noz Masqu\'e.
\newblock Variational berezinian problems and their relationship with graded variational problems.
\newblock {\sl Differential Geometric Methods in Mathematical Physics (Salamanca 1985)}, 137-149,
{\em Lecture Notes in Mathematics} {\bf 1251}, Springer, Berin, 1987.

\bibitem{Hu} J.~Huebschmann.
\newblock  Duality for Lie-Rinehart algebras and the modular class.
\newblock {\em J. Reine Angew. Math.} {\bf 510} (1999), 103--159.

\bibitem{Kn} M.~Kontsevich.
\newblock Course on deformation theory.
\newblock University of California, Berkeley, 1994.

\bibitem{KS} Y.~Kosmann-Schwarzbach.
\newblock Modular vector fields and Batalin-Vilkovisky algebras.
\newblock In {\sl Poisson Geometry}, J. Grabowski, P. Urbanski, eds.,
{\em Banach Center Publ.} {\bf 51} (2000), 109-–129.

\bibitem{KSMa} Y.~Kosmann-Schwarzbach and K.~C.~H.~Mackenzie.
\newblock Differential operators and actions of Lie algebroids.
\newblock In {\sl Quantization, Poisson brackets and beyond (Manchester,
2001)}, {\em Contemp. Math.} {\bf 315}, pp. 213--233, Amer. Math. Soc., Providence, RI, 2002.

\bibitem{KSM} Y.~Kosmann-Schwarzbach and J.~Monterde.
\newblock Divergence operators and odd {P}oisson brackets.
\newblock {\em Ann. Inst. Fourier (Grenoble)} {\bf 52} (2002), 419--456.

\bibitem{KSW} Y.~Kosmann-Schwarzbach and A.~Weinstein.
\newblock Relative modular classes of {L}ie algebroids.
\newblock {\em C. R. Math. Acad. Sci. Paris} {\bf 341} (2005), 509-–514.

\bibitem{KLW} Y.~Kosmann-Schwarzbach, C.~Laurent-Gengoux, and A.~Weinstein.
\newblock Modular classes of Lie algebroid morphisms.
\newblock {\em Transform. Groups} {\bf 13} (2008), 727-–755.

\bibitem{Ko} J.~L.~Koszul.
\newblock Crochet de Schouten-Nijenhuis et cohomolo\-gie.
\newblock {\em As\-t\'{e}risque}, hors serie, (1985), 257---271.

\bibitem{Ku} J.~Kubarski.
\newblock The Weil algebra and the secondary characteristic homomorphism of regular Lie algebroids.
\newblock In: {\sl Lie Algebroids and Related Topics in
Differential Geometry}, {\em Banach Center Publications}, vol. 54, 2001, pp. 135-–173.

\bibitem{LMM} M.~de~Le\'{o}n, J.C.~Marrero, E.~Mart\'{\i}nez.
\newblock Lagrangian submanifolds and dynamics on Lie algebroids.
\newblock {\em J. Phys. A: Math. Gen.} {\bf 38} (2005), R241--R308.

\bibitem{Ma} K.~C.~H.~Mackenzie.
\newblock Lie Groupoids and Lie Algebroids in Differential Geometry.
\newblock {\em LMS Lecture Notes Series} {\bf 124}, Cambridge Univ. Press, 1987.

\bibitem{M} J.~C.~Marrero.
\newblock Hamiltonian dynamics on {L}ie algebroids, unimodularity
and preservation of volumes.
\newblock {\em J. Geom. Mech.} {\bf 2} (2010), 243--263.

\bibitem{MV} J.~Monterde and J.~Vallejo.
\newblock Modular class of even symplectic manifolds.
\newblock {\em Theor. Math. Phys.} {\bf 132} (2002), 934–941.

\bibitem{Roy0} D.~Roytenberg.
\newblock {{Courant} algebroids, derived brackets and even symplectic
supermanifolds}.
\newblock PhD thesis, {UC Berkeley}, 1999, \texttt{arXiv:math.DG/9910078}.

\bibitem{Roy} D.~Roytenberg.
\newblock On the structure of graded symplectic
supermanifolds and Courant algebroids.
\newblock In {\sl Quantization, Poisson brackets and beyond (Manchester,
2001)}, {\em Contemp. Math.} {\bf 315}, pp. 169--185, Amer. Math. Soc., Providence, RI, 2002.

\bibitem{Sev} P.~{\v{S}}evera.
\newblock Some title containing the words "homotopy" and "symplectic", e.g. this one.
\newblock {\em Travaux math\'ematiques}, Univ. Luxemb., {\bf 16} (2005), 121--137.

\bibitem{Vai} I.~Vaisman.
\newblock Characteristic classes of Lie algebroid morphisms.
\newblock {\em Diff. Geom. Appl.} {\bf 28} (2010) 635-–647.

\bibitem{Va} A.~Yu.~Va{\u\i}ntrob.
\newblock Lie algebroids and homological vector fields.
\newblock {\em Uspekhi Matem. Nauk} {\bf 52(2)} (1997), 428--429.

\bibitem{We0} A.~Weinstein.
\newblock Coisotropic calculus and Poisson groupoids.
\newblock {\em J. Math. Soc. Japan} {\bf 40} (1988),
705–-727.

\bibitem{We1}  A.~Weinstein.
\newblock Lagrangian mechanics and grupoids.
\newblock {\em Fields Inst. Comm.} {\bf 7} (1996), 207--231.

\bibitem{We} A.~Weinstein.
\newblock The modular automorphism group of a {P}oisson manifold.
\newblock {\em J. Geom. Phys.} {\bf 23} (1997), 379--394.

\bibitem{Xu} P.~Xu.
\newblock Gerstenhaber algebras and BV-algebras in Poisson geometry.
\newblock {\em Comm. Math. Phys.} {\bf 200} (1999), 545--560.

\end{thebibliography}

\bigskip
\noindent Janusz Grabowski\\Institute of Mathematics, Polish Academy of Sciences\\\'Sniadeckich 8, P.O. Box
21, 00-956 Warszawa,
Poland\\{\tt jagrab@impan.pl}\\\\

\end{document}